\documentclass{anmathcs}
\usepackage{mathrsfs, amssymb, eucal, amsmath, amsthm, amscd, hyperref}
\usepackage{graphicx}
\usepackage{bm}
\usepackage{cite}
\bibliographystyle{unsrt}
\usepackage{color}
\usepackage{hyperref}
\hypersetup{
    colorlinks=true,
    linkcolor=blue,
    filecolor=magenta,
    urlcolor=blue,
    citecolor=red,
    }
\urlstyle{same}
\usepackage[dvipsnames]{xcolor}
\usepackage{amssymb,amsmath,amsthm,latexsym}
\usepackage{amsmath}
\colorlet{LightRubineRed}{RubineRed!70}
\colorlet{Mycolor1}{green!10!orange}
\usepackage{lipsum}
\newcommand\blfootnote[1]{%
  \begingroup
  \renewcommand\thefootnote{}\footnote{#1}%
  \addtocounter{footnote}{-1}%
  \endgroup
}
\usepackage{booktabs}
\usepackage{array}
\usepackage{longtable}

\newtheorem{theorem}{Theorem}

\newtheorem{corollary}[theorem] {Corollary}
\newtheorem{lemma} [theorem]{Lemma}

\newtheorem{remark}[theorem]{Remark}
\numberwithin{equation}{section}
\newtheorem{definition}[theorem]{Definition}

\pagestyle{headings}
\usepackage{geometry}
  \geometry{top=1.5in, bottom=1in, left=1in, right=1in}



\numberwithin{equation}{section}

\theoremstyle{remark}


\begin{document}
\title[On Characterizing Potential Friends of 20]{On Characterizing Potential Friends of 20}
\author[Tapas Chatterjee]{Tapas Chatterjee}
\address{Department of Mathematics, Indian Institute of Technology Ropar\\ Punjab, India}
\email{tapasc@iitrpr.ac.in}
\author[Sagar Mandal]{Sagar Mandal}
\address{Department of Mathematics and Statistics, Indian Institute of Technology Kanpur\\ Kalyanpur, Kanpur, Uttar Pradesh 208016, India}
\email{sagarmandal31415@gmail.com}
\author[Sourav Mandal]{Sourav Mandal}
\address{Department of Mathematics, Ramakrishna Mission Vivekananda Educational and Research Institute, Howrah}
\email{souravmandal1729@gmail.com}


\maketitle

\subjclass{11A25}
\keywords{ Abundancy Index, Friendly Numbers, Solitary Numbers, Sum of Divisors}

\begin{abstract}
Does $20$ have a friend? Or is it a solitary number? A folklore conjecture asserts that $20$ has no friends i.e. it is a solitary number. In this article, we prove that, a friend $N$ of $20$ is of the form $N=2\cdot5^{2a}\cdot m^2$, with $(3,m)=(7,m)=1$ and it has at least six distinct prime divisors. Furthermore, we show that  $\Omega(N)\geq 2\omega(N)+6a-5$ and if $\Omega(m)\leq K$ then $N< 10\cdot 6^{(2^{K-2a+3}-1)^2}$, where $\Omega(n)$ and $\omega(n)$ denote the total number of prime divisors and the number of distinct prime divisors of the integer $n$ respectively. In addition, we deduce that, not all exponents of odd prime divisors of friend $N$ of $20$ are congruent to $-1$ modulo $f$, where $f$ is the order of $5$ in $(\mathbb{Z}/p\mathbb{Z})^\times$ such that $3\mid f$ and  $p$ is a prime congruent to $1$ modulo $6$. Also, we prove necessary upper bounds for all prime divisors of friends of 20 in terms of the number of divisors of the friend. In addition, we prove that, if $P$ is the largest prime divisor of $N$ then $P<N^{\frac{1}{4}}$.
\end{abstract}

\blfootnote{\hspace{-8.2mm} \copyright\, 2025\, The Authors.}

\section{Introduction}

Two distinct positive integers $a$ and $b$ are said to be friendly if $I(a)=I(b)$, where $I(q)$ is the abundancy index of $q$, which is defined as $I(q)=\sigma(q)/{q}$, where $\sigma(q)$ is the sum of positive divisors of $q$. Solitary number refers to a number that has no friends. A number $n$ is said to be perfect if it has an abundancy index of $2$. A number $n$ is said to be deficient or defective if its abundancy index is less than $2$. A number $n$ is said to be abundant or excessive if its abundance index exceeds $2$.\\
The number $10$ is the smallest known suspected solitary number. Various properties and constraints on a potential friend of $10$ have been explored in \cite{6,ward,8,9,10,11}, but no such number has been found. Moreover, $10$ is not the only suspected solitary number, several others (e.g. $14,15,20$ etc.) are known, though their status remains unresolved. The only large known family of solitary numbers comes from $n$ such that $\sigma(n)$ and $n$ are relatively prime.\\

A positive integer $N>20$ is said to be a friend of $20$ if $I(N)=I(20)=21/10$. It is believed that whether 20 is solitary or not is as difficult as to finding odd perfect numbers. If any of the suspected solitary numbers up to $372$ is actually a friendly number, then its smallest friend must be strictly greater than $10^{30}$\cite{OEIS}. In this article, we prove the following results related to friends of 20
\begin{theorem}\label{thm 1.1}
A friend $N$ of $20$ is of the form $N=2\cdot 5^{2a}m^2$,  with $(3,m)=(7,m)=1$, where $a,m \in \mathbb{Z}^{+}$. Further, $N$ has at least six distinct prime divisors.
\end{theorem}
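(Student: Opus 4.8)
The plan is to work locally with the multiplicative abundancy index and the equation $I(N)=\sigma(N)/N=21/10$. Since $21/10$ is in lowest terms, $10\mid N$, so $N$ is even and $5\mid N$. Write $N=2^{\alpha}M$ with $M$ odd; then $5\mid M$ and multiplicativity gives $\frac{2^{\alpha+1}-1}{2^{\alpha}}\,I(M)=\frac{21}{10}$, i.e. $I(M)=\frac{21}{10}\cdot\frac{2^{\alpha}}{2^{\alpha+1}-1}$. The factor $\frac{2^{\alpha}}{2^{\alpha+1}-1}=\bigl(2-2^{-\alpha}\bigr)^{-1}$ strictly decreases in $\alpha$, so for every $\alpha\ge 2$ it is at most $4/7$ and hence $I(M)\le\frac{21}{10}\cdot\frac{4}{7}=\frac{6}{5}$. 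But $M$ odd with $5\mid M$ forces $I(M)\ge I(5)=\frac{6}{5}$, with equality only for $M=5$. Thus $\alpha\ge 2$ would give $N=2^{\alpha}M=20$, contrary to $N>20$; therefore $\alpha=1$.

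With $\alpha=1$ we have $I(M)=7/5$, equivalently $5\sigma(M)=7M$ with $M=N/2$ odd. As $7M$ is odd, $\sigma(M)$ is odd, and an odd number has odd $\sigma$ exactly when it is a perfect square; writing the $5$-part of the square $M$ as $5^{2a}$ (with $a\ge1$ since $5\mid M$) and the complementary square as $m^{2}$, $(5,m)=1$, gives $N=2\cdot 5^{2a}m^{2}$. To rule out $3$ and $7$ I isolate $m$: from $I(5^{2a})I(m^{2})=7/5$ and $I(5^{2a})=\frac{5^{2a+1}-1}{4\cdot 5^{2a}}\ge I(5^{2})=\frac{31}{25}$ I get $I(m^{2})\le\frac{7/5}{31/25}=\frac{35}{31}$. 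Since $I(3^{2})=\frac{13}{9}$ and $I(7^{2})=\frac{57}{49}$ both exceed $35/31$, neither $3$ nor $7$ can divide $m$, so $(3,m)=(7,m)=1$.

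For the prime count, the exact relation reads $I(m^{2})=\frac{28\cdot 5^{2a-1}}{5^{2a+1}-1}\in\bigl(\frac{28}{25},\frac{35}{31}\bigr]$. Every prime dividing $m$ is at least $11$, and since $I(p^{2e})<\frac{p}{p-1}$ the lower bound forces $\prod_{p\mid m}\frac{p}{p-1}>\frac{28}{25}$; a single prime $\ge 11$ contributes at most $\frac{11}{10}<\frac{28}{25}$, so $\omega(m)\ge 2$ and $\omega(N)\ge 4$. Upgrading this to $\omega(N)\ge 6$ is the crux. Here I would exploit that $I(m^{2})$ must equal the specific rational above, not merely lie in the window: its reduced denominator divides $m^{2}$, forcing the odd prime factors of $5^{2a+1}-1$ into $m$, after which the relation $\sigma(N)=3\cdot 7\cdot 5^{2a-1}m^{2}$ requires every prime factor of each local $\sigma(p^{e})$ to divide $N$ as well, triggering a cascade of new primes. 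The main obstacle is to show this cascade cannot terminate with fewer than four distinct primes for any admissible $a$; I expect to combine the narrowness of the window (which prevents small primes from clustering—for instance $11$ and $13$ cannot both divide $m$ because $I(11^{2})I(13^{2})>\frac{35}{31}$) with a finite verification of the few remaining small configurations and a uniform estimate covering large $a$.
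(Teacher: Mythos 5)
Your structural reductions are correct, and in places slicker than the paper's own: your monotonicity argument in $\alpha$ forcing $v_2(N)=1$ replaces the paper's Proposition 4.1 (which compares $I(N)$ with $I(2^2\cdot 5)$); the odd-$\sigma$-iff-square fact yields $N=2\cdot5^{2a}m^2$ where the paper instead argues that an odd exponent would make some factor $1+p_k+\cdots+p_k^{a_k}$ even against an odd right-hand side; and your exclusion of $3$ and $7$ via $I(3^2)=\frac{13}{9}>\frac{35}{31}$ and $I(7^2)=\frac{57}{49}>\frac{35}{31}$ is arithmetically the same as the paper's Proposition 4.3 (its check $I(5^2\cdot7^2)=\frac{1767}{1225}>\frac{7}{5}$ is exactly your $\frac{57}{49}>\frac{35}{31}$). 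The window $I(m^2)\in\bigl(\frac{28}{25},\frac{35}{31}\bigr]$ and the conclusion $\omega(N)\geq 4$ also check out, matching the paper's Lemma 5.1.

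The genuine gap is that the theorem's main content --- \emph{at least six} distinct prime divisors --- is only announced as a plan, not proved. You never eliminate $\omega(N)=4$, let alone $\omega(N)=5$, and the ``cascade'' you sketch, while it is the right mechanism (e.g.\ $a=1$ gives $\sigma(5^2)=31$, forcing $31\mid m$, exactly as the paper uses repeatedly), is far from ``a finite verification of the few remaining small configurations.'' To make it terminate the paper needs three ingredients you do not supply: (i) its Remark 3.5, that $5\mid\sigma(q^{2\alpha})$ for a prime $q>5$ forces $q\equiv1\pmod{10}$ and $2\alpha+1\equiv0\pmod 5$; (ii) the propagation result (its Corollary 3.5), that if $f_{p^*}^{q}\mid f_{p}^{q}$ then $p\mid\sigma(q^{2a})$ forces $p^*\mid\sigma(q^{2a})$ --- this is what makes the cascade rigorous, e.g.\ $11\mid\sigma(5^{2a})$ forces $71\mid\sigma(5^{2a})$ because $f_{11}^{5}=f_{71}^{5}=5$; and (iii) an appendix table of several dozen computed orders, some involving enormous primes (e.g.\ $7\mid\sigma(421^{2a})$ forces $797310237403261\mid\sigma(421^{2a})$), deployed across roughly thirty subcases for $\omega(N)=5$, indexed by the least prime $\geq 11$ dividing $N$ (namely $11,13,17,19$) together with the bounded ranges for the remaining primes obtained from estimates of the kind you derived. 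Note also that no separate ``uniform estimate covering large $a$'' appears in the paper, nor is one available from your window alone: the interval $\bigl(\frac{28}{25},\frac{35}{31}\bigr]$ is insensitive to $a$ beyond excluding clusters like $\{11,13\}$, and it is the order conditions, uniform in $a$, that close each subcase. As it stands, your proposal establishes the first sentence of the theorem and $\omega(N)\geq4$; the step to $\omega(N)\geq6$ is missing and constitutes essentially all of the paper's Section 5.1.
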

The above theorem depicts that 20 cannot have an odd friend. In the next theorem, we give a lower bound for the total number of prime divisors of any such friend $N$ of 20. Here is the statement of the theorem: 
 \begin{theorem}\label{thm 1.2}
    Let $N=2\cdot5^{2a}m^2$ be a friend of 20, then $$\Omega(N)\geq 2\omega(N)+6a-5$$ where $\Omega(n)$ and $\omega(n)$ denote the total number of prime divisors and the number of distinct prime divisors of the integer $n$ respectively.
  \end{theorem}
   As an immediate corollary, we get the following upper bound for $N$. 
  
\begin{corollary}\label{coro 1.3}
     If $N=2\cdot5^{2a}m^2$ is a friend of 20 where $a,m \in \mathbb{Z}^{+}$ and $\Omega(m)\leq K$ then $$N< 10\cdot 6^{(2^{K-2a+3}-1)^2}.$$
    
\end{corollary}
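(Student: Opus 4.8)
The plan is to derive the corollary from Theorem~\ref{thm 1.2} in two stages: first turn the inequality on $\Omega(N)$ into an upper bound on the number of distinct prime divisors of $N$, and then show that a number with fixed abundancy index $21/10$ and few distinct prime factors cannot be large.

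For the first stage I would use that in the representation $N=2\cdot 5^{2a}m^{2}$ one has $\gcd(m,10)=1$ (indeed $\gcd(m,210)=1$ by Theorem~\ref{thm 1.1}), so that $\Omega(N)=1+2a+2\Omega(m)$ and $\omega(N)=2+\omega(m)$. Substituting these into $\Omega(N)\ge 2\omega(N)+6a-5$ and simplifying collapses everything to $\Omega(m)-\omega(m)\ge 2a-1$, whence $\omega(m)\le \Omega(m)-2a+1\le K-2a+1$ and therefore
\[
k:=\omega(N)=\omega(m)+2\le K-2a+3 .
\]
Since the target quantity $10\cdot 6^{(2^{\,x}-1)^{2}}$ is increasing in $x$, it suffices to prove $N<10\cdot 6^{(2^{k}-1)^{2}}$ with $k=\omega(N)$ and then replace $k$ by $K-2a+3$.

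The second stage is the heart of the matter. Writing the prime factorisation as $N=\prod_{i=1}^{k}q_i^{e_i}$ with $q_1<\cdots<q_k$, the identity $I(N)=21/10$ becomes $21\,N=10\prod_{i=1}^{k}\sigma(q_i^{e_i})$. The key local estimate isolates the largest prime power: since $\gcd(\sigma(q_k^{e_k}),q_k)=1$ and, crucially, $q_k\notin\{3,7\}$ so that $\gcd(q_k,21)=1$, comparing reduced denominators in
\[
\frac{\sigma(q_k^{e_k})}{q_k^{e_k}}=\frac{21\prod_{i<k}q_i^{e_i}}{10\prod_{i<k}\sigma(q_i^{e_i})}
\]
forces $q_k^{e_k}\mid 10\prod_{i<k}\sigma(q_i^{e_i})$. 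Using the crude bound $\sigma(q^{e})<2q^{e}$ for every prime power then gives $q_k^{e_k}<10\cdot 2^{\,k-1}\prod_{i<k}q_i^{e_i}=10\cdot 2^{\,k-1}\,(N/q_k^{e_k})$, that is,
\[
N<10\cdot 2^{\,k-1}\Bigl(\tfrac{N}{q_k^{e_k}}\Bigr)^{2}.
\]
This ``squaring step'' is exactly what produces a doubly exponential bound.

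Finally I would close the recursion by induction on $k$, carrying the ansatz $N<10\cdot 6^{(2^{k}-1)^{2}}$: feeding the inductive bound for $M:=N/q_k^{e_k}$ (a number with $k-1$ distinct primes) into the squaring step and using the identity $(2^{k}-1)^{2}-2(2^{k-1}-1)^{2}=2\cdot 4^{\,k-1}-1$ shows that the exponent on the right has enormous slack over the left, so the constants $10\cdot 2^{\,k-1}$ and the base change are absorbed comfortably. The main obstacle is that $M$ need not itself be a friend of $20$: its abundancy index is $I(N)/I(q_k^{e_k})$, whose reduced denominator can grow when the top prime power is stripped off, so the induction hypothesis must be phrased for an arbitrary abundancy index while tracking this denominator (controlled once more by $\sigma(q_k^{e_k})<2q_k^{e_k}$ together with the divisibility just established). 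Once this bookkeeping is in place, the slack computed above guarantees that the induction goes through and yields $N<10\cdot 6^{(2^{k}-1)^{2}}\le 10\cdot 6^{(2^{K-2a+3}-1)^{2}}$, as required.
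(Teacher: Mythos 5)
Your first stage coincides with the paper's proof: from $\Omega(N)=1+2a+2\Omega(m)$ and Theorem \ref{thm 1.2} one gets $\Omega(m)\geq \omega(N)+2a-3$ (this is exactly inequality (\ref{ap}) in the paper), hence $\omega(N)\leq K-2a+3$; that part is correct. The gap is entirely in your second stage. The paper does not re-derive the doubly exponential bound: it observes that $N/2=5^{2a}m^2$ is an odd $7/5$-perfect number and invokes Lemma \ref{pn} (Nielsen's theorem, a cited result) with $d=5$, giving $N/2<5\cdot 6^{(2^{\omega(N)}-1)^2}\leq 5\cdot 6^{(2^{K-2a+3}-1)^2}$. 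You instead attempt to prove that lemma from scratch, and your induction does not close. Your local steps are fine: since $\gcd(q_k,\sigma(q_k^{e_k}))=1$ and $\gcd(q_k,210)=1$, comparing $q_k$-adic valuations in $21N=10\,\sigma(N)$ does give $q_k^{e_k}\mid 10\prod_{i<k}\sigma(q_i^{e_i})$, and with $\sigma(q^e)<2q^e$ this yields $N<10\cdot 2^{k-1}M^2$ for $M=N/q_k^{e_k}$; your identity $(2^k-1)^2-2(2^{k-1}-1)^2=2\cdot 4^{k-1}-1$ is also correct. But the slack computation absorbs the constants only if $M$ satisfies the \emph{same} bound $10\cdot 6^{(2^{k-1}-1)^2}$, i.e., only if the fraction attached to $M$ still has a bounded denominator. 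It does not: $I(M)=21\,q_k^{e_k}/(10\,\sigma(q_k^{e_k}))$, whose reduced denominator divides $10\,\sigma(q_k^{e_k})$, and by your own squaring inequality $q_k^{e_k}<10\cdot 2^{k-1}M$, so that denominator can be of order $M$ itself. An induction hypothesis of Nielsen's shape $M<d'(d'+1)^{(2^{k-1}-1)^2}$ is then vacuous, i.e., circular. Note also that the valuation argument never used that $q_k$ is the largest prime: it gives $q_i^{2e_i}<10\cdot 2^{k-1}N$ for \emph{every} $i$, and multiplying these yields only $N^2<(10\cdot 2^{k-1})^kN^k$, which is empty for $k\geq 2$. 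So the sentence ``once this bookkeeping is in place\dots the induction goes through'' defers precisely the hard content: controlling the drifting fraction through the recursion is the substance of Nielsen's paper, and it is not recoverable from $\sigma(q^e)<2q^e$ plus exponent slack. (Your base case $k=1$ is also unexamined, though that is minor by comparison.)

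The repair is what the paper does: after your first stage, pass to the odd number $N/2$, note $I(N/2)=\frac{21/10}{3/2}=\frac{7}{5}$, and apply Lemma \ref{pn} with $m/d=7/5$ and $k\leq\omega(N)\leq K-2a+3$ to conclude $N<10\cdot 6^{(2^{K-2a+3}-1)^2}$. Unless you intend to reproduce Nielsen's management of the denominators in full, citing the lemma is the right move; your sketch correctly identifies the squaring mechanism that produces the double exponential, but that mechanism alone does not prove the stated bound.
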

In addition, we also give a lower bound for $N$ using the following theorem:   
\begin{theorem}\label{thm 1.4}
Let $p\equiv1 \pmod 6$ be a prime, also let order of $5$ in $(\mathbb{Z}/p\mathbb{Z})^\times$ be $f$ such that $3\mid f$. If $v_5(N)\equiv -1 \pmod f$, where $N$ is a friend of 20, then there exists an odd prime divisor of $N$ (say $q$) such that $v_{q}(N) \not \equiv -1 \pmod f$, where $v_p(N)=k$ denotes for $p^k \mid N$ but $p^{k+1} \nmid N$.  
\end{theorem}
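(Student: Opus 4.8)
The plan is to extract from the abundancy relation $I(N)=21/10$ a rigid $3$-adic constraint on $\sigma(N)$ and then play it against the hypothesis through the order of $5$. First I would rewrite the defining equation: since $I(N)=\sigma(N)/N=21/10$ and $N=2\cdot 5^{2a}m^2$ with $(3,m)=1$ (Theorem~\ref{thm 1.1}), one gets $\sigma(N)=\tfrac{21}{10}N=3\cdot 7\cdot 5^{2a-1}m^2$, whence $v_3(\sigma(N))=1$ exactly. This single equality is the engine of the argument. Because $\sigma$ is multiplicative and $\sigma(2)=3$, the factor coming from the prime $2$ already accounts for the entire power of $3$ in $\sigma(N)$; consequently $3\nmid \sigma(q^{\,v_q(N)})$ for every odd prime $q\mid N$ (including $q=5$).

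Next I would show that the prime $p$ in the statement actually divides $N$. The hypothesis $v_5(N)=2a\equiv -1\pmod f$ means $f\mid 2a+1$; since the order of $5$ modulo $p$ is $f$, this gives $5^{2a+1}\equiv 1\pmod p$, i.e. $p\mid 5^{2a+1}-1=4\,\sigma(5^{2a})$, and as $p$ is odd we conclude $p\mid \sigma(5^{2a})\mid \sigma(N)$. Now $\sigma(N)=3\cdot 7\cdot 5^{2a-1}m^2$, so to deduce $p\mid m$ (hence $p\mid N$) I must rule out $p\in\{3,5,7\}$. The congruence $p\equiv 1\pmod 6$ already excludes $3$ and $5$; and $p=7$ is excluded because $f\mid 2a+1$ forces $f$ to be odd, whereas the order of $5$ modulo $7$ is $6$. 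Thus $\gcd(p,210)=1$ and $p\mid N$, with $v_p(N)$ even.

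Finally I would close by a mod-$3$ evaluation of $\sigma(p^{\,v_p(N)})$. Suppose, for contradiction, that $v_p(N)\equiv -1\pmod f$. Since $p\equiv 1\pmod 6$ gives $p\equiv 1\pmod 3$, every term of $\sigma(p^{\,v_p(N)})=1+p+\cdots+p^{\,v_p(N)}$ is $\equiv 1\pmod 3$, so $\sigma(p^{\,v_p(N)})\equiv v_p(N)+1\pmod 3$. But $3\mid f$ and $f\mid v_p(N)+1$ give $3\mid v_p(N)+1$, forcing $3\mid \sigma(p^{\,v_p(N)})$ and contradicting the conclusion of the first paragraph. Hence $v_p(N)\not\equiv -1\pmod f$, and the odd prime $q=p$ itself witnesses the theorem. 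I expect the main obstacle to be conceptual rather than computational: recognising that $v_3(\sigma(N))=1$ is the binding constraint and that it is already saturated by $\sigma(2)=3$, together with handling the edge cases ($p=7$ and the parity of $f$) that make the hypothesis non-vacuous.
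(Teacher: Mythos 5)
Your proof is correct and follows essentially the same route as the paper's: show $p \mid \sigma(5^{2a})$ (hence $p$ divides $N$), then evaluate $\sigma\bigl(p^{v_p(N)}\bigr) \equiv v_p(N)+1 \pmod 3$ using $p \equiv 1 \pmod 3$, and play $3 \mid f$ against $(N,3)=1$ (Proposition \ref{pr3.12}) to rule out $v_p(N) \equiv -1 \pmod f$, so that $q=p$ itself is the witness. Your only deviations are cosmetic and in fact tighten the exposition: you get $p \mid \sigma(5^{2a})=(5^{2a+1}-1)/4$ directly from the order of $5$ rather than invoking Lemma \ref{ssthm1.1}, and you explicitly exclude $p \in \{3,5,7\}$ before concluding $p \mid m$ (the paper leaves the case $p=7$, and the step from $p \mid \sigma(5^{2a})$ to $p \mid N$, implicit).
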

\begin{remark}
The preceding theorem implies that not all exponents of odd prime divisors of friend $N$ of $20$ are congruent to $-1$ modulo $f$, where
the order of $5$ in $(\mathbb{Z}/p\mathbb{Z})^\times$ is $f$ such that $3\mid f$ and  $p\equiv1 \pmod 6$ is a prime.
\end{remark}
\begin{corollary}\label{coro 1.6}
If $N=2F^2$ is a friend of $20$, then $F$ is not square-free. 
\end{corollary}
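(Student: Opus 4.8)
The plan is to assume for contradiction that $F$ is square-free and then violate Theorem \ref{thm 1.4} by exhibiting a suitable auxiliary prime. First I would translate the hypothesis $N=2F^2$ into the notation of Theorem \ref{thm 1.1}: comparing $N=2F^2$ with $N=2\cdot 5^{2a}m^2$ gives $F=5^a m$, and since $v_2(N)=1$ the number $F$ is odd. If $F$ were square-free, it would be a product of distinct odd primes, so every odd prime divisor $q$ of $N$ would satisfy $v_q(N)=2$; in particular $v_5(N)=2a=2$, forcing $a=1$.

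The key step is to choose the prime $p$ in Theorem \ref{thm 1.4} so that its hypotheses hold while its conclusion fails. Since under the square-free assumption every odd prime exponent equals $2\equiv -1 \pmod 3$, the natural modulus is $f=3$, and I would look for a prime $p\equiv 1 \pmod 6$ with $\mathrm{ord}_p(5)=3$. Such a prime must divide $5^3-1=124=2^2\cdot 31$, and indeed $p=31$ works: one has $31\equiv 1 \pmod 6$ and $5^3=125\equiv 1\pmod{31}$ while $5,25\not\equiv 1\pmod{31}$, so $\mathrm{ord}_{31}(5)=3$ and $3\mid f$.

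With this prime in hand I would verify the remaining hypothesis $v_5(N)=2\equiv -1\pmod 3$, whereupon Theorem \ref{thm 1.4} forces the existence of an odd prime divisor $q$ of $N$ with $v_q(N)\not\equiv -1\pmod 3$. But under the square-free assumption every odd prime divisor of $N$ has exponent $2\equiv -1\pmod 3$, which is the desired contradiction; hence $F$ is not square-free.

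I expect the only real obstacle to be the middle step: recognizing that the common exponent $2$ pins down the residue $-1\pmod 3$ and therefore that $f=3$ is the modulus to aim for, and then producing a prime (namely $31$) realizing $\mathrm{ord}_p(5)=3$ together with $p\equiv 1\pmod 6$. Once that prime is located, the clash with Theorem \ref{thm 1.4} is immediate and the argument closes.
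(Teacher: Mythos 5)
Your proposal is correct and matches the paper's own proof: the paper likewise applies Theorem \ref{thm 1.4} with $p=31$ (so $f=\mathrm{ord}_{31}(5)=3$, and $31\equiv 1\pmod 6$) to produce an odd prime divisor $q$ of $N$ with $v_q(N)\not\equiv 2\pmod 3$, contradicting the square-free assumption under which every odd exponent is $2$. Your write-up is in fact slightly more explicit than the paper's, since it spells out why square-freeness forces $v_5(N)=2\equiv -1\pmod 3$ before invoking the theorem.
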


\begin{theorem}\label{thm 1.8}
   Let $N$ be a friend of 20 and $q_r$ ($r\geq 3$) be the $r$-th smallest prime divisor of $N$. Then necessarily 
$q_r< L (\log L+\log\log L),$
where 
$$L=\lceil\frac{\mathcal{U}\omega(N)}{\mathcal{V}}\rceil,~~~~ \frac{\mathcal{U}}{\mathcal{V}} > \frac{1}{\frac{28}{25}\cdot \displaystyle{\prod_{5\leq i\leq r+1} (1-\frac{1}{p_i})-1}}$$ 
where $p_i$ is the $i$-th prime number, $\frac{\mathcal{U}}{\mathcal{V}}\in \mathbb{Q}^{+}\setminus \mathbb{Z}^{+}$, and $(\mathcal{U},\mathcal{V})=1$ such that $\mathcal{UV}(r-2)+2\mathcal{U}+\mathcal{V}>\mathcal{V}^2$.
\end{theorem}
\begin{theorem}\label{newthm}
Let $N$ be a friend of $20$ and let $P$ be the largest prime divisor of $N$. Then $P<N^{\frac{1}{4}}$.    
\end{theorem}
\section{Definition and Notation}
In this section, we introduce some notations and definitions.\\
Let $q$ be an odd prime. Define
    $$\mathcal{F}_{q}(x)=\{p : \text{$p$ odd prime}, p \mid x, q\mid \sigma(p^\eta), \eta \geq 2, \eta~ \text{even}\}$$
    and  $|\mathcal{F}_q(N)|$ denotes the cardinality of $\mathcal{F}_q(N)$.\\\\
    Let  $$\mathcal{A}_{n, a}(r):=\left \{\sum_{i=1}^{r}a^{c_i}-r :\sum_{i=1}^{r} c_i =n, c_i \in \mathbb{N} \right \}$$~~\text{and}~~ $$\mathcal{H}_{n,a}:=\bigcup_{r=1}^{n}\mathcal{A}_{n,a}(r).$$
   
   \textbf{Notation:}
    \begin{itemize}
   \item $v_p(N)$ denotes the integer $k$ such that $p^{k}\mid N$ but $p^{k+1} \nmid N$.
        \item $f_p^q$ is denoted for the multiplicative order of $q$ modulo $p$.
    \end{itemize}
Throughout this article, we use $p,p_1,\dots,p_{\omega(N)},q,q_1,\dots,q_{\omega(N)}$ for denoting prime numbers. Further, we assume that the numbers $a,b,m,a_1,\dots,a_{\omega(N)}$ are positive integers.

\section{Preliminaries}
In this section, we state some of the useful results which will play a significant role in proving our main theorems.

\begin{lemma}[\cite{2,7}]\label{lem 3.1}~~~~~~~~
\begin{enumerate}

    \item $I(n)$ is weakly multiplicative, that is, if $n$ and $m$ are two coprime positive integers then $I(nm)=I(n)I(m)$.
    \item Let $a,n$ be two positive integers and $a>1$. Then $I(an)>I(n)$.
    \item Let $p_1$, $p_2$, $p_3$,\dots, $p_m$ be $m$ distinct primes and $a_1$, $a_2$, $a_3$ ,\dots, $a_m$ be positive integers then
\begin{align*}
    I\biggl (\prod_{i=1}^{m}p_i^{a_i}\biggl)=\prod_{i=1}^{m}\biggl(\sum_{j=0}^{a_i}p_i^{-j}\biggl)=\prod_{i=1}^{m}\frac{p_i^{a_i+1}-1}{p_i^{a_i}(p_i-1)}.
\end{align*}
\item \label{p4}If $p_{1}$,...,$p_{m}$ are distinct primes, $q_{1}$,...,$q_{m}$ are distinct primes such that  $p_{i}\leq q_{i}$ for all $1\leq i\leq m$. If $t_1,t_2,...,t_m$ are positive integers then
\begin{align*}
   I \biggl(\prod_{i=1}^{m}p_i^{t_i}\biggl)\geq I\biggl(\prod_{i=1}^{m}q_i^{t_i}\biggl).
\end{align*}
\item \label{p5} If $n=\displaystyle{\prod_{i=1}^{m}p_i^{a_i}}$, then $I(n)<\displaystyle{\prod_{i=1}^{m}\frac{p_i}{p_i-1}}$.

\end{enumerate}
\end{lemma}
\begin{lemma}[\cite{11}, Theorem 1.1]\label{ss2024thm1.1}
 Let $p,q$ be two distinct prime numbers with  $p^{k-1}\mid\mid (q-1)$ where $k$ is some positive integer. Then $p$ divides $\sigma(q^{2a})$ if and only if $2a+1 \equiv\ 0 \pmod f$ where $f$ is the smallest odd positive integer greater than $1$ such that $q^{f}\equiv 1 \pmod {p^{k}}$. 

\end{lemma}

\begin{corollary}[\cite{11}, Corollary 1.2]\label{co3.5}
 Let $p$, $p^*$ and $q$ be three distinct prime numbers with $p^{k-1}\mid \mid (q-1)$ and $p^{*k^*-1}\mid \mid (q-1)$ $(k,k^*\in \mathbb{Z^{+}}$) also let $p \mid \sigma(q^{2a})$ and $p^* \mid \sigma(q^{2a^*})$( a,$a^*$$\in \mathbb{Z^{+}}$) with $f_{p}^{q}$, $f_{p^*}^{q}$ respectively in Lemma \ref{ss2024thm1.1}. If $f_{p^*}^{q}\mid f_{p}^{q}$ then $pp^*\mid \sigma(q^{2a})$.  
\end{corollary}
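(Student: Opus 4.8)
The plan is to reduce each divisibility statement of the form ``$p$ divides a sum of divisors'' to a congruence on the exponent via Lemma~\ref{ssthm1.1}, and then let transitivity of divisibility among the orders $f_{p}^{q}$ and $f_{p^*}^{q}$ do all the work. The whole corollary is really a bookkeeping consequence of the equivalence supplied by that lemma, so I would set it up so that the iff is invoked once in each direction.

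First I would record what Lemma~\ref{ssthm1.1} yields for the two primes separately. Applied to the pair $(p,q)$ under the hypothesis $p^{k-1}\mid\mid(q-1)$, the assumption $p\mid\sigma(q^{2a})$ is equivalent to $2a+1\equiv 0\pmod{f_{p}^{q}}$, i.e. $f_{p}^{q}\mid(2a+1)$. Applied to the pair $(p^*,q)$ under $p^{*\,k^*-1}\mid\mid(q-1)$, the assumption $p^*\mid\sigma(q^{2a^*})$ certifies that $f_{p^*}^{q}$ is a well-defined odd integer greater than $1$ (and in particular $f_{p^*}^{q}\mid(2a^*+1)$); the role of this second hypothesis in the proof is precisely to guarantee that the order $f_{p^*}^{q}$ exists so that the divisibility chain below makes sense.

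Next I would chain the divisibilities. From the hypothesis $f_{p^*}^{q}\mid f_{p}^{q}$ together with $f_{p}^{q}\mid(2a+1)$ just obtained, transitivity gives $f_{p^*}^{q}\mid(2a+1)$, that is $2a+1\equiv 0\pmod{f_{p^*}^{q}}$. Now I would read Lemma~\ref{ssthm1.1} for the pair $(p^*,q)$ in the reverse direction, but with the exponent $2a$ in place of $2a^*$: the congruence $2a+1\equiv 0\pmod{f_{p^*}^{q}}$ forces $p^*\mid\sigma(q^{2a})$. Combining this with the given $p\mid\sigma(q^{2a})$ and the fact that $p\neq p^*$ are distinct primes, coprimality yields $pp^*\mid\sigma(q^{2a})$, which is exactly the assertion.

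Since the argument is essentially a single transitivity observation, I do not expect a serious obstacle; the only step that needs genuine care is the direction in which Lemma~\ref{ssthm1.1} is applied. The subtlety worth flagging is that the conclusion concerns $\sigma(q^{2a})$ (the exponent attached to $p$), whereas $p^*$ entered the hypotheses only through $\sigma(q^{2a^*})$. The bridge is that Lemma~\ref{ssthm1.1} makes divisibility of $\sigma(q^{2a})$ by $p^*$ depend only on the residue of $2a+1$ modulo $f_{p^*}^{q}$, so once $f_{p^*}^{q}\mid(2a+1)$ is established the particular value of $a^*$ becomes irrelevant.
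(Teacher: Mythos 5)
Your proof is correct: applying Lemma \ref{ssthm1.1} in the forward direction for $(p,q)$ to get $f_{p}^{q}\mid(2a+1)$, chaining with $f_{p^*}^{q}\mid f_{p}^{q}$, and then invoking the lemma in the reverse direction for $(p^*,q)$ with the exponent $2a$, followed by coprimality of the distinct primes $p$ and $p^*$, is exactly the intended argument. Note that the paper itself states this corollary with a citation to \cite{SS2024} and gives no proof; your reconstruction matches the derivation in that source, and your remark that the hypothesis $p^*\mid\sigma(q^{2a^*})$ serves only to guarantee the existence of the odd order $f_{p^*}^{q}$ is a correct and worthwhile clarification.
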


\begin{lemma}[\cite{11}, Theorem 1.5]\label{ss}

If $p$ and $q$ are two distinct prime numbers with $q>p$, then $p$ divides $\sigma(q^{2a})$ if and only if for $r\neq 1$, $q\equiv r \pmod p$ and $2a+1 \equiv 0 \pmod f$ where $f$ is the smallest odd positive integer  greater than $1$ such that $r^f\equiv 1 \pmod p$ and for $r=1$, $q\equiv r \pmod p$ and $2a+1\equiv 0 \pmod p$.

\end{lemma}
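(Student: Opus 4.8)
The plan is to reduce everything to a congruence and a statement about multiplicative orders. I start from the closed form
$\sigma(q^{2a}) = 1 + q + q^2 + \cdots + q^{2a} = \frac{q^{2a+1}-1}{q-1}$
(the geometric sum underlying Lemma \ref{lem 3.1}(3)). Since $q$ is a prime distinct from $p$ we have $p \nmid q$, so writing $q \equiv r \pmod p$ I may take $1 \le r \le p-1$. The argument then splits according to whether $r = 1$ or $r \neq 1$, exactly matching the two branches of the statement. (For $r\neq 1$ one may alternatively note that $p\nmid(q-1)$ places us in the case $k=1$ of Lemma \ref{ssthm1.1}, which yields that branch immediately; I nonetheless give the self-contained argument below.)

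First I would dispose of the case $r = 1$. Here $q^i \equiv 1 \pmod p$ for every $i$, so $\sigma(q^{2a}) \equiv \sum_{i=0}^{2a} 1 = 2a+1 \pmod p$, and hence $p \mid \sigma(q^{2a})$ if and only if $2a+1 \equiv 0 \pmod p$. This is immediate and gives the $r=1$ branch of the equivalence.

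Next, for $r \neq 1$, the factor $q-1 \equiv r-1 \not\equiv 0 \pmod p$ is a unit modulo $p$, so the closed form gives $p \mid \sigma(q^{2a})$ if and only if $p \mid q^{2a+1}-1$, i.e. $r^{2a+1} \equiv 1 \pmod p$. To turn this into a divisibility condition on $2a+1$, I introduce the multiplicative order $d = \mathrm{ord}_p(r)$ and use the standard fact that $r^{n} \equiv 1 \pmod p$ if and only if $d \mid n$. Taking $n = 2a+1$ shows $r^{2a+1} \equiv 1 \pmod p$ if and only if $d \mid 2a+1$; since $2a+1$ is odd, this forces $d$ to be odd.

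The only step requiring care is identifying $d$ with the quantity $f$ in the statement, namely the smallest odd integer greater than $1$ with $r^f \equiv 1 \pmod p$. The set of exponents $n$ with $r^n \equiv 1$ is precisely the set of positive multiples of $d$; when $d$ is odd (and $d > 1$, which holds because $r \neq 1$) the smallest odd such exponent exceeding $1$ is $d$ itself, so $f = d$ and the condition $d \mid 2a+1$ reads $2a+1 \equiv 0 \pmod f$. When $d$ is even, no odd exponent annihilates $r$, so $f$ does not exist and, consistently, $r^{2a+1} \equiv 1$ can never hold for the odd number $2a+1$; thus $p \nmid \sigma(q^{2a})$ for every $a$, matching the failure of the congruence. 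Combining the two cases yields the claimed biconditional. I expect the main (though modest) obstacle to be handling the parity interplay between $d$ and the odd exponent $2a+1$ cleanly, and checking that the ``smallest odd integer greater than $1$'' definition of $f$ coincides with the order exactly in the solvable range.
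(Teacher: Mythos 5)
Your proposal is correct: the reduction via $\sigma(q^{2a})=(q^{2a+1}-1)/(q-1)$, the unit argument when $r\not\equiv 1\pmod p$, the direct congruence $\sigma(q^{2a})\equiv 2a+1\pmod p$ when $r=1$, and the identification of $f$ with $\mathrm{ord}_p(r)$ (including the observation that when the order is even no admissible $f$ exists and the divisibility indeed fails for every $a$) are all handled soundly. The paper itself gives no proof of this lemma --- it is quoted from \cite{SS2024} --- and your geometric-series-plus-multiplicative-order argument is essentially the standard one underlying that cited source, so your write-up matches the intended proof rather than diverging from it.
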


\begin{corollary}\label{cor12}
 $\sigma(q^{2a})$ is divisible by $5$ if and only if $q \equiv 1 \pmod{10}$ with $2a+1 \equiv 0 \pmod{5}$.   
\end{corollary}
\begin{proof}
 In Lemma \ref{ss}, we set $p=5$ and $q > 5$ then as for $r= 2, 3, 4$ we have no odd integer $f >1$ such that $r^f \equiv 1 \pmod 5$, hence the only choice for $r$ is $1$, and therefore the result follows.
\end{proof}

\begin{lemma}\label{lemma13}
    If $p_n$ is $n\text{-th}$ prime number then
    $$p_n<n(\log n+\log\log n)$$
    for $n\geq 6$.
\end{lemma}
 \begin{proof}
  See \cite{pb} for a proof.   
 \end{proof}

\begin{definition}
An odd number $M$ is said to be an odd $m/d$-perfect number if $
\frac{\sigma(M)}{M}=\frac{m}{d}.$ 
\end{definition}
  \begin{lemma}\label{pn}
  If M is an odd m/d-perfect number with k distinct prime factors then $$M < d(d+1)^{(2^k-1)^2}.$$
  \end{lemma}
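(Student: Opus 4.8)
The plan is to read Lemma~\ref{pn} as a Nielsen-type bound (the special case $m/d=2/1$ being the classical estimate for odd perfect numbers with $k$ prime factors) and to prove it by induction on $k=\omega(M)$, at each stage stripping off the largest prime power. Write $M=\prod_{i=1}^{k}p_i^{a_i}$ with $p_1<\cdots<p_k$ and $\sigma(M)/M=m/d$ in lowest terms, so that $m\prod_{i}p_i^{a_i}=d\prod_{i}\sigma(p_i^{a_i})$. For the base case $k=1$ one has $M=p_1^{a_1}$; since $\sigma(p_1^{a_1})\equiv 1\pmod{p_1}$ the ratio $\sigma(p_1^{a_1})/p_1^{a_1}$ is already reduced, whence $d=p_1^{a_1}=M<d(d+1)$, in agreement with $(2^{1}-1)^2=1$.

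The engine of the induction is the observation that every prime power in $M$ is controlled by the others. From $m\prod_i p_i^{a_i}=d\prod_i\sigma(p_i^{a_i})$ we get $p_i^{a_i}\mid d\prod_j\sigma(p_j^{a_j})$, and since $\gcd(p_i^{a_i},\sigma(p_i^{a_i}))=1$ (as $\sigma(p_i^{a_i})\equiv1\pmod{p_i}$) this improves to
\begin{align*}
p_i^{a_i}\mid d\prod_{j\neq i}\sigma(p_j^{a_j}),\qquad\text{hence}\qquad p_i^{a_i}\le d\prod_{j\neq i}\sigma(p_j^{a_j}).
\end{align*}
Applying this to the largest prime and writing $M'=M/p_k^{a_k}=\prod_{i<k}p_i^{a_i}$ gives $p_k^{a_k}\le d\prod_{i<k}\sigma(p_i^{a_i})=d\,I(M')\,M'$. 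Because $I(M')<I(M)=m/d$ by Lemma~\ref{lem 3.1}(2), this collapses to the squaring relation
\begin{align*}
M=p_k^{a_k}M'\le d\,I(M')\,M'^2\le m\,M'^2 .
\end{align*}
Thus the whole number is bounded by (a multiple of) the square of a number with one fewer prime factor, which is exactly the source of the doubling $2^{k-1}\mapsto 2^{k}$ in the exponent.

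The induction then combines $M\le mM'^2$ with the inductive estimate $M'<d'(d'+1)^{(2^{k-1}-1)^2}$, where $m'/d'=I(M')$ in lowest terms. Squaring the inductive bound and invoking the identity $2^{k}-1=2(2^{k-1}-1)+1$, so that $(2^{k}-1)^2=4(2^{k-1}-1)^2+4(2^{k-1}-1)+1$, leaves a surplus exponent of $4(2^{k-1}-1)+1=2^{k+1}-3$ on $(d+1)$. Provided $d'$ is controlled by a fixed power of $d+1$, this surplus comfortably absorbs the factors $m$ and $d'^2$ (for $k=2$ one uses $I(M)<\tfrac32\cdot\tfrac54<2$ for odd $M$, so $m<2d\le d(d+1)$), and the whole estimate closes to $M<d(d+1)^{(2^{k}-1)^2}$.

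The main obstacle — and the genuine content of the lemma — is precisely the control of the reduced denominator $d'$ of $M'$ in terms of $d$. Peeling the largest prime only yields $d'\mid d\,\sigma(p_k^{a_k})$, and $\sigma(p_k^{a_k})$ is a priori as large as $M'$ itself, so the abundancy identity alone places no polynomial bound on $a_k$ or on $d'$; for the same reason one cannot hope to bound an individual prime power of $M$ by a fixed power of $d+1$ (for $m/d=2$ such a bound on the least prime power would force $3\mid M$, which is not known for odd perfect numbers). The resolution must be integrality bookkeeping: writing $g=\gcd(M,\sigma(M))=M/d$ and using $v_{p_i}(g)=\min\!\big(a_i,\sum_{j\neq i}v_{p_i}(\sigma(p_j^{a_j}))\big)$, one shows that smaller primes divide $\sigma(p_k^{a_k})$ to a large enough combined power that the cancellation in $\gcd\!\big(m\,p_k^{a_k},\,d\,\sigma(p_k^{a_k})\big)$ keeps $d'$ within the required fixed power of $d+1$. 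Making this valuation estimate uniform over all $k$, together with the edge cases $p_k\mid m$ and $p_k\mid d$, is the delicate step that pins the exponent down to exactly $(2^{k}-1)^2$; the multiplicative manipulations above are then routine.
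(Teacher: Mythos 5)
There is a genuine gap, and you have in fact located it yourself without closing it: the control of the reduced denominator $d'$ of $I(M')$ after peeling off the largest prime power. Your reduction $M < m\,M'^2$ is correct (via $p_k^{a_k}\mid d\prod_{j<k}\sigma(p_j^{a_j})$ and $\gcd(p_k^{a_k},\sigma(p_k^{a_k}))=1$), but the inductive hypothesis is then invoked for $M'$ with its \emph{own} fraction $m'/d'$, and nothing ties $d'$ to $d$: peeling destroys the cancellation, and the only a priori information, $d'\mid d\,\sigma(p_k^{a_k})$, permits $d'$ to be of the same order as $M$ itself. The failure is visible already at $k=2$: there $M'=p_1^{a_1}$, and since $\gcd(p_1^{a_1},\sigma(p_1^{a_1}))=1$ the reduced denominator is $d'=M'$ exactly, so the inductive bound $M'<d'(d'+1)$ is vacuously true and carries no information about $M'$ in terms of $d$; your induction therefore produces no bound of the form $M<d(d+1)^9$ even in the first nontrivial case. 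The ``integrality bookkeeping'' you defer in the final paragraph is not a uniformizable detail of this scheme --- it is the entire content of the theorem, and the naive largest-prime induction is precisely the approach that is known not to close.

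For comparison: the paper does not prove this lemma at all; its ``proof'' is the citation to Nielsen \cite{pn}, where the result is Theorem~1 of that paper. Nielsen's argument is structurally different from yours: rather than inducting on $\omega(M)$ and re-reducing the abundancy fraction at each step (which is where your $d'$ escapes control), he keeps the target equation $\sigma(M)/M=m/d$ fixed and runs a delicate induction over \emph{partial factorizations}, tracking a set of already-determined prime powers and bounding each new prime power in terms of $d$ and the known part, with a careful accounting of ``slack'' that is responsible for the exact exponent $(2^k-1)^2$. If you want a complete proof by your outline, you would need to reprove that machinery; as written, the step ``Provided $d'$ is controlled by a fixed power of $d+1$'' is an unproved (and, in this generality, false) assumption on which everything after it rests.
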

  \begin{proof}
  See \cite{5} for a proof.
  \end{proof}

\section{Basic properties of friends of 20}
Before we start proving our main theorems, it is convenient to deduce some of the basic properties of friends of $20$. Note that, If $N$ is a friend of 20, then
$$I(N)=\frac{\sigma(N)}{N}=\frac{21}{10}=I(20)$$
which implies that, $$10\cdot \sigma(N)=21\cdot N$$ so $10 \mid \text{N}$. Therefore, $N$ has the form: $\text{N}=2^{a}\cdot 5^{b}\cdot m$, where $(2,m)=(5,m)=1$.

  \begin{lemma}\label{pr3.11}
Let $N$ be a friend of $20$, then $2 \mid\mid  N $.
\end{lemma}
\begin{proof}
Let $N$ be a friend of 20, then we can write $N=2^{a}\cdot 5^{b}\cdot m$ with $(2,m)=(5,m)=1$. Note that, $N>20$. If possible, assume that $a\geq 2$. Then
we have from Lemma \ref{lem 3.1} that
$I(N)>I(2^2\cdot5)=\frac{21}{10}$
but this is absurd. Therefore, $N$ cannot have more than one $2$ in its prime factorization. This completes the proof.
\end{proof}
\begin{lemma}\label{lemma17}
A friend $N$ of $20$ is of the form $N=2\cdot 5^{2a}m^2$.
\end{lemma}
\begin{proof}
 Let $N$ be a friend of $20$, with $\omega(N)=s$, then 
 $I(N)=\frac{21}{10}$
 implies,
 \begin{align}\label{1}
      10\cdot\sigma(N)=21\cdot N.
 \end{align}
Therefore using  Lemma \ref{pr3.11} and (\ref{1}) we get that $N=2\cdot 5^{a}\cdot m$. Note that $m>1$ as if $m=1$ then
$$I(N)=I(2\cdot 5^{a})=I(2)\cdot I(5^{a})<\frac{3\cdot 5}{2\cdot 4}=\frac{15}{8}<\frac{21}{10}.$$
Let $m=\displaystyle{\prod_{i=1}^{s-2}p_{i}^{a_i}}.$
Then, from (\ref{1}) we obtain
$10\cdot\sigma(2\cdot 5^{a}\cdot\prod_{i=1}^{s-2}p_{i}^{a_i})=21\cdot 2\cdot 5^{a}\cdot\displaystyle{\prod_{i=1}^{s-2}p_{i}^{a_i}},$
which implies,
$\sigma(2)\cdot\sigma(5^{a})\cdot\prod_{i=1}^{s-2}\sigma(p_{i}^{a_i})=21\cdot 5^{a-1}\cdot\prod_{i=1}^{s-2}p_{i}^{a_i}. $
Since $\sigma(2)=3$, we have
\begin{align}\label{2}
 \sigma(5^{a})\cdot\prod_{i=1}^{s-2}\sigma(p_{i}^{a_i})=7\cdot 5^{a-1}\cdot\prod_{i=1}^{s-2}p_{i}^{a_i}.   
\end{align}

If possible, suppose that one of $a,a_1,\dots,a_{s-2}$ is odd. Without loss of generality, suppose that $a_k,1\leq k\leq s-2$ is odd. Then from (\ref{2}) we can write
\begin{align}\label{3}
(1+5+...+5^{a})\cdots(1+p_k+...+p_k^{a_k})\cdots(1+p_{s-2}+...+p_{s-2}^{a_{s-2}})\\=7\cdot 5^{a-1}\cdot\prod_{i=1}^{s-2}p_{i}^{a_i}.
\end{align}
Since all $p_i>2$ and $a_k$ is odd, it implies $(1+p_k+...+p_k^{a_k})$ is even and so does the left hand side of (\ref{3}) but the right hand side of (\ref{3}) is an odd integer, which is absurd. Therefore  $a,a_1,\dots,a_{s-2}$ are even positive integers. This proves that, the form of $N$ is $N=2\cdot 5^{2a}m^2$. 

\end{proof}
\begin{remark}
If $N$ is a friend of $20$, then it is easy to see from (\ref{2}) that, the prime divisors of $\sigma(q^{2a_q})$ belong to the set $\{7,p: p\mid (\frac{N}{2})\}$, where $q^{2a_q}\mid\mid N$. 
\end{remark}

\begin{lemma}\label{lemma19}
    Let $N$ be a friend of $20$ then it is co-prime to $21$. 
\end{lemma}
\begin{proof}
At first, we show that $N$ is co-prime to $3$. Assume that $N=2\cdot 5^{2a}\cdot3^{2b}\cdot m^2$, then
    $\frac{\sigma(N)}{N}=\frac{21}{10}$
    implies
    $$\sigma(2)\cdot \sigma(5^{2a})\cdot \sigma(3^{2b})\cdot \sigma(m^2)=21\cdot 5^{2a-1}\cdot3^{2b}\cdot m^2$$
which is equivalent to,
    $$\frac{\sigma(m^2)}{m^2}=\frac{21\cdot 5^{2a-1}\cdot3^{2b}}{\sigma(2)\cdot \sigma(5^{2a})\cdot \sigma(3^{2b})}.$$
Since $\sigma(m^2)\geq m^2$, we must have
$21\cdot 5^{2a-1}\cdot3^{2b}\geq \sigma(2)\cdot \sigma(5^{2a})\cdot \sigma(3^{2b})$,
since $\sigma(2)=3$, we have
$7\cdot 5^{2a-1}\cdot3^{2b}\geq \sigma(5^{2a})\cdot \sigma(3^{2b})$
that is,
\begin{align}\label{4}
    \frac{7}{5}\geq I(5^{2a}\cdot3^{2b}).
\end{align}
By Lemma \ref{lem 3.1}, we have $\frac{7}{5}<\frac{9}{5}=I(5\cdot 3)\leq I(5^{2a}\cdot 3^{2b})$ which contradicts (\ref{4}). This proves that $(N,3)=1$.\\
Assume that $N=2\cdot 5^{2a}\cdot7^{2b}\cdot m^2$, then
    $\frac{\sigma(N)}{N}=\frac{21}{10}$
    implies
    $$\sigma(2)\cdot \sigma(5^{2a})\cdot \sigma(7^{2b})\cdot \sigma(m^2)=21\cdot 5^{2a-1}\cdot7^{2b}\cdot m^2$$
which is equivalent to,
    $$\frac{\sigma(m^2)}{m^2}=\frac{21\cdot 5^{2a-1}\cdot7^{2b}}{\sigma(2)\cdot \sigma(5^{2a})\cdot \sigma(7^{2b})}.$$
Since $\sigma(m^2)\geq m^2$, we must have
$21\cdot 5^{2a-1}\cdot7^{2b}\geq \sigma(2)\cdot \sigma(5^{2a})\cdot \sigma(7^{2b})$, since $\sigma(2)=3$, we have
$7\cdot 5^{2a-1}\cdot7^{2b}\geq \sigma(5^{2a})\cdot \sigma(7^{2b})$
that is,
\begin{align}\label{5}
    \frac{7}{5}\geq I(5^{2a}\cdot7^{2b}).
\end{align}
By Lemma \ref{lem 3.1}, we have $\frac{7}{5}<\frac{1767}{1225}=I(5^2\cdot 7^2)\leq I(5^{2a}\cdot 7^{2b})$, which contradicts (\ref{5}). This proves that $(N,7)=1$.\\
Consequently, we have $N$ is co-prime to $21$. This completes the proof.
\end{proof}

 \section{Proof of the main theorems}
\subsection{Proof of theorem \ref{thm 1.1} } 
If $N=2\cdot5^{2a}\cdot\displaystyle{\prod_{i=1}^{\omega(N)}p_i^{2a_i}}$ is a friend of $20$, then an upper bound for each prime $p_i$ can be obtained by the method discussed in the Appendix section. Note that, if any prime $p_i$ exceeds the given  bound then immediately, $I(N)<I(20)$. For more details we refer the reader to the Appendix section.\\
We shall direct mention the upper bound for each prime $p_i$ in upcoming proofs, depending on the distinct prime divisors of $N$. Further, we shall heavily use Corollary \ref{co3.5} in the proof of this theorem. The required $f_{p}^{q}$, for primes $p$ and $q$ can be found in the Appendix section. \\
In order to prove this theorem, we require the following lemmas:
\begin{lemma}\label{nlemma}
    Let $N$ be a friend of 20 then it has at least six distinct prime divisors that is $\omega(N)\geq 6$.
\end{lemma}
\begin{proof}
Let $N$ be a friend of $20$. Assume that $\omega(N)=3$, then $N=2\cdot 5^{2a}\cdot p^{2b}$ for some prime $p$. Then
$$I(N)\leq I(2\cdot5^{2a}\cdot 11^{2b})=\frac{3}{2}\cdot I(5^{2a}\cdot 11^{2b})<\frac{3\cdot 5\cdot 11}{2\cdot4\cdot10}=\frac{33}{16}<\frac{21}{10}.$$
Therefore $\omega(N)=3$ is not possible. Assume that $\omega(N)=4$, then $N=2\cdot 5^{2a}\cdot \displaystyle{\prod_{i=1}^{2}p_i^{2a_i}}$, where $p_1$ and $p_2$ are prime numbers with $p_1 < p_2$.\\
Note that $p_1\leq 17$ and $p_2 \leq 53$ (for any other choice of $p_1$ or $p_2$ gives $I(N)<I(20)$). Then
$\frac{\sigma(N)}{N}=\frac{21}{10}$
implies,
\begin{align}\label{eqnew}
\sigma(5^{2a})\cdot\sigma(p_1^{2a_1})\cdot\sigma(p_2^{2a_2})=7\cdot 5^{2a-1}\cdot p_1^{2a_1}\cdot p_2^{2a_2}.
\end{align}
Since $5 \nmid \sigma(5^{2a})$, either $5 \mid \sigma(p_1^{2a_1})$ or, $5 \mid \sigma(p_2^{2a_2})$. Note that, Corollary \ref{cor12} implies $p\equiv 1\pmod{10}$, whenever $5\mid \sigma(p^{2\alpha})$, where $p>5$ is a prime number and $\alpha\in\mathbb{Z}^{+}$. Suppose that $5\mid \sigma(p_1^{2a_1})$ then $p_1=11$ but  $71 \mid \sigma(5^{2a})$ whenever $11=p_1\mid\sigma(5^{2a})$. Thus $p_2=71$ but which is absurd as $p_2 \leq 53$. \\
This implies that $p_1$ can not be $11$ and $5\nmid\sigma(p_1^{2a_1})$. Therefore, $5\mid\sigma(p_2^{2a_2})$ (as $5\mid\sigma(5^{2a})\cdot\sigma(p_1^{2a_1})\cdot\sigma(p_2^{2a_2})$). Since $p_2\leq 53$, $p_2$ can be either $31$ or $41$. Note that, for any choice of $11\neq p_1\leq 17$, $p_1\nmid \sigma(5^{2a})$. Therefore $p_2$ must divide $\sigma(5^{2a})$, which implies that $p_2=31$. Then either $p_1=13~\text{or}~p_1=17$ but neither of these choices can divide $\sigma(31^{2a_2})$. This proves that $N$ cannot have exactly four distinct prime divisors.\\
    If possible, assume that $\omega(N)=5$, then $N=2\cdot5^{2a_1}\cdot \displaystyle{\prod_{i=1}^{3}p_i^{2\alpha_i}}$, where $p_1<p_2<p_3$. Note that $11 \leq p_1\leq 19$. We shall prove the theorem by eliminating the following possible cases: \\
\begin{center}
\textbf{\textcolor{RedViolet}{Case 1.}}\\ \textbf{\textcolor{blue}{For $p_3=11$}}\\
\end{center}

Note that, if $p_3=11$ then $13 \leq p_4\leq 109$. For $13 \leq p_4\leq 37 $, $I(N)> I(5^211^2p_2^2)>\frac{7}{5}$, therefore we shall consider $ 41 \leq p_4 \leq 109$. The following figures describe the possible subcases when $p_3=11$.\\

\begin{figure}[h]
    \centering
    \includegraphics[width=1\linewidth]{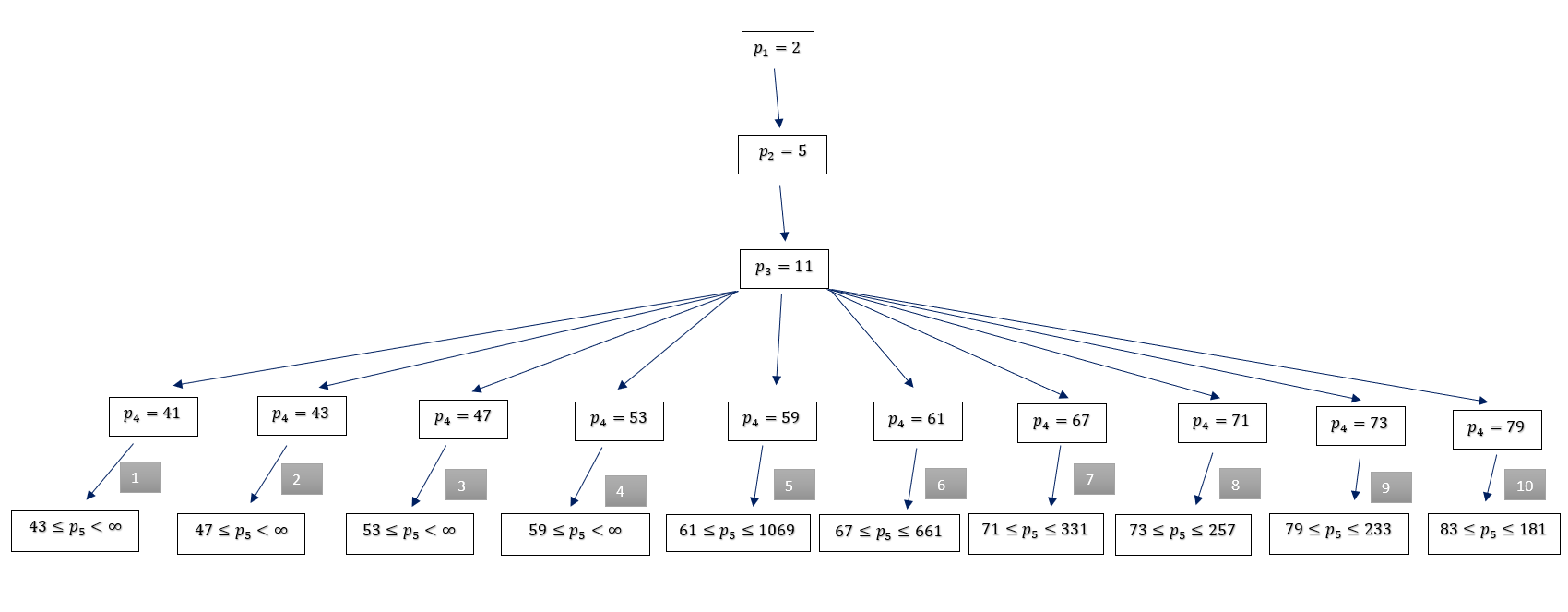}
    \caption{\textcolor{Mycolor1}{Possible subcases when $p_3=11$}}
    \label{fig:enter-label}
\end{figure}

\textbf{Subcase 1.1:}
If $N=2\cdot 5^{2a_2}\cdot11^{2a_3}\cdot41^{2a_4}\cdot p_5^{2a_5}$, then $p_5\geq 43$. Our claim is that $a_2=1$. Suppose that $a_2 \geq 2$, then
\begin{align*}
    I(N)>I(2\cdot5^4\cdot11^2\cdot41^2)&=\frac{3}{2}\cdot I(5^4\cdot11^2\cdot41^2)\\&=\frac{48810867}{23113750}>\frac{21}{10}
\end{align*}

therefore claim follows. For $a_2=1$, since $\sigma(5^{2})=31$, it implies that $p_5=31$ but this is impossible as $p_5\geq 43$. Hence, this case is impossible.\\

\textbf{Subcase 1.2:}
If $N=2\cdot 5^{2a_2}\cdot11^{2a_3}\cdot43^{2a_4}\cdot p_5^{2a_5}$, then $p_5\geq 47$. Similar argument given in subcase 1.1 implies that $N$ cannot be a friend of $20$.\\

\textbf{Subcase 1.3:}
If $N=2\cdot 5^{2a_2}\cdot11^{2a_3}\cdot47^{2a_4}\cdot p_5^{2a_5}$, then $p_5\geq 53$. Similar argument given in subcase 1.1 implies that $N$ cannot be a friend of $20$.\\

\textbf{Subcase 1.4:}
If $N=2\cdot 5^{2a_2}\cdot11^{2a_3}\cdot53^{2a_4}\cdot p_5^{2a_5}$, then $p_5\geq 59$. Similar argument given in subcase 1.1 implies $a_2\leq 2$. If $a_2=1$, then $\sigma(5^{2})=31$, it follows that $p_5=31$ but this is absurd since $59\leq p_5$. If, $a_2=2$ then $\sigma(5^{4})=781$, it follows that $p_5=71$. But then
\begin{align*}
  I(N)\geq I(2\cdot5^4\cdot11^2\cdot53^2\cdot71^2)&=\frac{3}{2}\cdot I(5^4\cdot11^2\cdot53^2\cdot71^2)\\
  &=\frac{5840769081}{2742286250}>\frac{21}{10}.  
\end{align*}

Hence this case is impossible.\\

\textbf{Subcase 1.5:}
If $N=2\cdot 5^{2a_2}\cdot11^{2a_3}\cdot59^{2a_4}\cdot p_5^{2a_5}$, then $61\leq p_5\leq 1069$. Now if $p_3=11\mid 
 \sigma(5^{2a_2})$, immediately $71 \mid \sigma(5^{2a_2})$ which means $p_5=71$, but then 
 $$\frac{21}{10}< I(2\cdot 5^2 \cdot11^2\cdot 59^2\cdot 71^2)<  I(2\cdot 5^{2a_2}\cdot11^{2a_3}\cdot59^{2a_4}\cdot 71^{2a_5}).$$ Hence $11 \nmid \sigma(5^{2a_2})$. If $59\mid \sigma(5^{2a_2})$, immediately $35671\mid \sigma(5^{2a_2})$ and it implies that $p_5=35671$ but $ p_5\leq 1069$. Therefore $p_5$ must divide $\sigma(5^{2a_2})$. Note that, if $5\mid \sigma(q^{2a_q})$, then $q\equiv1\pmod{10}$ due to Corollary \ref{cor12}. Therefore, $5 \mid \sigma(11^{2a_3})$ is possible but then $3221 \mid \sigma(11^{2a_3})$, which is impossible. Hence $5$ must divide $\sigma(p_5^{2a_5})$.\\
 Therefore $p_5$ must belong to $\{p: p~\text{is a prime such that}~p\equiv 1 \pmod{10}, 70<p<1062 \}$.\\
Note that, $7 \nmid \sigma(5^{2a_2}),\sigma(59^{2a_4})$. If $7 \mid \sigma(11^{2a_3})$ then immediately $19 \mid \sigma(11^{2a_3})$, which is impossible. Hence $7$ must divide $\sigma(p_5^{2a_5})$. Since $5\mid \sigma(p_5^{2a_5})$ and $7\mid \sigma(p_5^{2a_5})$ we have $p_5\in$  \{71, 151, 191, 211, 281, 331, 401, 421, 431, 491, 541, 571, 631, 641, 701, 751, 821, 911, 991, 1031, 1051, 1061\}. The following table summarizes the subcase when  $p_5\in$  \{71, 151, 191, 211, 281, 331, 401, 421, 431, 491, 541, 571, 631, 641, 701, 751, 821, 911, 991, 1031, 1051, 1061\}.

\begin{center}
\renewcommand{\arraystretch}{1.6} 

\setlength{\tabcolsep}{6pt} 
\begin{tabular}{|>{\raggedright\arraybackslash}p{1.4cm}|
                >{\raggedright\arraybackslash}p{4.5cm}|
                >{\raggedright\arraybackslash}p{8.5cm}|}

\hline
\textbf{Case} & \textbf{Assumption} & \textbf{Contradiction} \\
\hline

1 & $7 \mid \sigma(71^{2a_5})$ & $883 \mid \sigma(71^{2a_5})$, which is impossible. \\
\hline

2 & $7 \mid \sigma(151^{2a_5})$ & Then $3 \mid \sigma(151^{2a_5})$, which is absurd due to Lemma \ref{lemma19}. \\
\hline

3 & $7 \mid \sigma(191^{2a_5})$ & $31 \mid \sigma(191^{2a_5})$, which is impossible. \\
\hline

4 & $7 \mid \sigma(211^{2a_5})$ & $307189 \mid \sigma(211^{2a_5})$, which is impossible. \\
\hline

5 & $7 \mid \sigma(281^{2a_5})$ & $29 \mid \sigma(281^{2a_5})$, which is impossible. \\
\hline

6 & $7 \mid \sigma(331^{2a_5})$ & $3 \mid \sigma(331^{2a_5})$, which is impossible due to Lemma \ref{lemma19}. \\
\hline

7 & $7 \mid \sigma(401^{2a_5})$ & $23029 \mid \sigma(401^{2a_5})$, which is impossible. \\
\hline

8 & $7 \mid \sigma(421^{2a_5})$ & $797310237403261 \mid \sigma(421^{2a_5})$, which is impossible. \\
\hline

9 & $7 \mid \sigma(431^{2a_5})$ & $397 \mid \sigma(431^{2a_5})$, which is impossible. \\
\hline

10 & $7 \mid \sigma(491^{2a_5})$ & $617 \mid \sigma(491^{2a_5})$, which is impossible. \\
\hline

11 & $7 \mid \sigma(541^{2a_5})$ & $3 \mid \sigma(541^{2a_5})$, which is impossible due to Lemma \ref{lemma19}. \\
\hline

12 & $7 \mid \sigma(571^{2a_5})$ & $3 \mid \sigma(571^{2a_5})$, which is impossible due to Lemma \ref{lemma19}. \\
\hline

13 & $7 \mid \sigma(631^{2a_5})$ & $6032531 \mid \sigma(631^{2a_5})$, which is impossible. \\
\hline

14 & $7 \mid \sigma(641^{2a_5})$ & $58789 \mid \sigma(641^{2a_5})$, which is impossible. \\
\hline

15 & $7 \mid \sigma(701^{2a_5})$ & $16975792017452101 \mid \sigma(701^{2a_5})$, which is impossible. \\
\hline

16 & $7 \mid \sigma(751^{2a_5})$ & $3 \mid \sigma(751^{2a_5})$, which is impossible due to Lemma \ref{lemma19}. \\
\hline
17 & $7 \mid \sigma(821^{2a_5})$ & $211 \mid \sigma(821^{2a_5})$, which is impossible. \\
\hline

18 & $7 \mid \sigma(911^{2a_5})$ & $81750272028928231 \mid \sigma(911^{2a_5})$, which is impossible. \\
\hline

19 & $7 \mid \sigma(991^{2a_5})$ & $3 \mid \sigma(991^{2a_5})$, which is impossible due to Lemma \ref{lemma19}. \\
\hline

20 & $7 \mid \sigma(1031^{2a_5})$ & $97 \mid \sigma(1031^{2a_5})$, which is impossible. \\
\hline

21 & $7 \mid \sigma(1051^{2a_5})$ & $29 \mid \sigma(1051^{2a_5})$, which is impossible. \\
\hline

22 & $7 \mid \sigma(1061^{2a_5})$ & $160969 \mid \sigma(1061^{2a_5})$, which is impossible. \\
\hline

\end{tabular}
\vspace{0.5em}
\end{center}
Hence $p_5$ cannot be belong to the set \{71, 151, 191, 211, 281, 331, 401, 421, 431, 491, 541, 571, 631, 641, 701, 751, 821, 911, 991, 1031, 1051, 1061\}, which is a contradiction. Hence this case is also impossible.\\

\textbf{Subcase 1.6:}
If $N=2\cdot 5^{2a_2}\cdot11^{2a_3}\cdot61^{2a_4}\cdot p_5^{2a_5}$, then $67\leq p_5\leq 661$. Now if $5 \mid \sigma(11^{2a_3})$ then immediately  $3221 \mid \sigma(11^{2a_3})$, which is impossible. If $5 \mid \sigma(61^{2a_4})$ then immediately  $21491 \mid \sigma(61^{2a_4})$, which is impossible. Therefore $5 \mid \sigma(p_5^{2a_5})$ also note that, $7 \nmid \sigma(5^{2a_2})$ and $7 \nmid \sigma(61^{2a_4})$ . If $7 \mid \sigma(11^{2a_3})$ then immediately $19 \mid \sigma(11^{2a_3})$, which is impossible. Hence $7\mid\sigma(p_5^{2a_5})$. Therefore, similar argument given in subcase 1.5 proves that, $N$ cannot be a friend of $20$.\\

\textbf{Subcase 1.7:}
 If $N=2\cdot 5^{2a_2}\cdot11^{2a_3}\cdot67^{2a_4}\cdot p_5^{2a_5}$, then $71\leq p_5\leq 331$. If $5 \mid \sigma(11^{2a_3})$ then immediately  $3221 \mid \sigma(11^{2a_3})$, which is impossible. Since $5 \nmid \sigma(67^{2a_4})$, we must have $5 \mid \sigma(p_5^{2a_5})$ and it implies $p_5 \equiv 1 \pmod{10}$. Note that, if $7 \mid \sigma(11^{2a_3})$ then immediately $19 \mid \sigma(11^{2a_3})$, which is impossible. Again if $7 \mid \sigma(67^{2a_4})$ then $3 \mid \sigma(67^{2a_4})$, which is impossible. Hence $7 \mid \sigma(p_5^{2a_5})$. Similar argument given in subcase 1.5 implies that, $N$ cannot be a friend of $20$.\\

\textbf{Subcase 1.8:}
If $N=2\cdot 5^{2a_2}\cdot11^{2a_3}\cdot71^{2a_4}\cdot p_5^{2a_5}$ then $73\leq p_5\leq 257$. If $5 \mid \sigma(11^{2a_3})$ then immediately  $3221 \mid \sigma(11^{2a_3})$, which is impossible.  Now if $5 \mid \sigma(71^{2a_4})$ then immediately  $2221 \mid \sigma(71^{2a_4})$, which is impossible. Therefore $5 \mid \sigma(p_5^{2a_5})$ which implies $p_5 \equiv 1 \pmod{10}$. Note that, $7 \nmid \sigma(5^{2a_2})$ and if $7 \mid \sigma(11^{2a_2})$ then immediately $19 \mid \sigma(11^{2a_3})$, which is impossible. Also if $7 \mid \sigma(71^{2a_4})$ then $883 \mid \sigma(71^{2a_4})$, which is impossible. Hence $7 \mid \sigma(p_5^{2a_5})$. Therefore, similar argument given in subcase 1.5 proves that, $N$ cannot be a friend of $20$.\\

\textbf{Subcase 1.9:}
If $N=2\cdot 5^{2a_2}\cdot11^{2a_3}\cdot73^{2a_4}\cdot p_5^{2a_5}$ then $79\leq p_5\leq 233$. 
If $5 \mid \sigma(11^{2a_3})$ then immediately  $3221 \mid \sigma(11^{2a_3})$, which is impossible. Since $5 \nmid \sigma(73^{2a_4})$, implies $5 \mid \sigma(p_{5}^{2a_5})$ and $p_5 \equiv 1 \pmod{10}$. Note that, $7 \nmid \sigma(5^{2a_2})$,$73^{2a_4}$ and if $7 \mid \sigma(11^{2a_2})$ then immediately $19 \mid \sigma(11^{2a_3})$, which is impossible. Hence $7 \mid \sigma(p_5^{2a_5})$. Therefore, similar argument given in subcase 1.5 proves that, $N$ cannot be a friend of $20$. \\

\textbf{Subcase 1.10:}
If $N=2\cdot 5^{2a_2}\cdot11^{2a_3}\cdot79^{2a_4}\cdot p_5^{2a_5}$, then $83\leq p_5\leq 181$. If $5 \mid \sigma(11^{2a_3})$ then immediately  $3221 \mid \sigma(11^{2a_3})$, which is impossible. Since $5 \nmid \sigma(79^{2a_4})$, implies $5 \mid \sigma(p_{5}^{2a_5})$ and $p_5 \equiv 1 \pmod{10}$. Note that, $7 \nmid \sigma(5^{2a_2})$. If $7 \mid \sigma(11^{2a_2})$ then immediately $19 \mid \sigma(11^{2a_3})$, which is impossible. If $7 \mid \sigma(79^{2a_4})$ then immediately $3 \mid \sigma(79^{2a_4})$, which is impossible. Hence $7 \mid \sigma(p_5^{2a_5})$. Therefore, similar argument given in subcase 1.5 proves that, $N$ cannot be a friend of $20$.

\begin{figure}[h]
    \centering
    \includegraphics[width=1\linewidth]{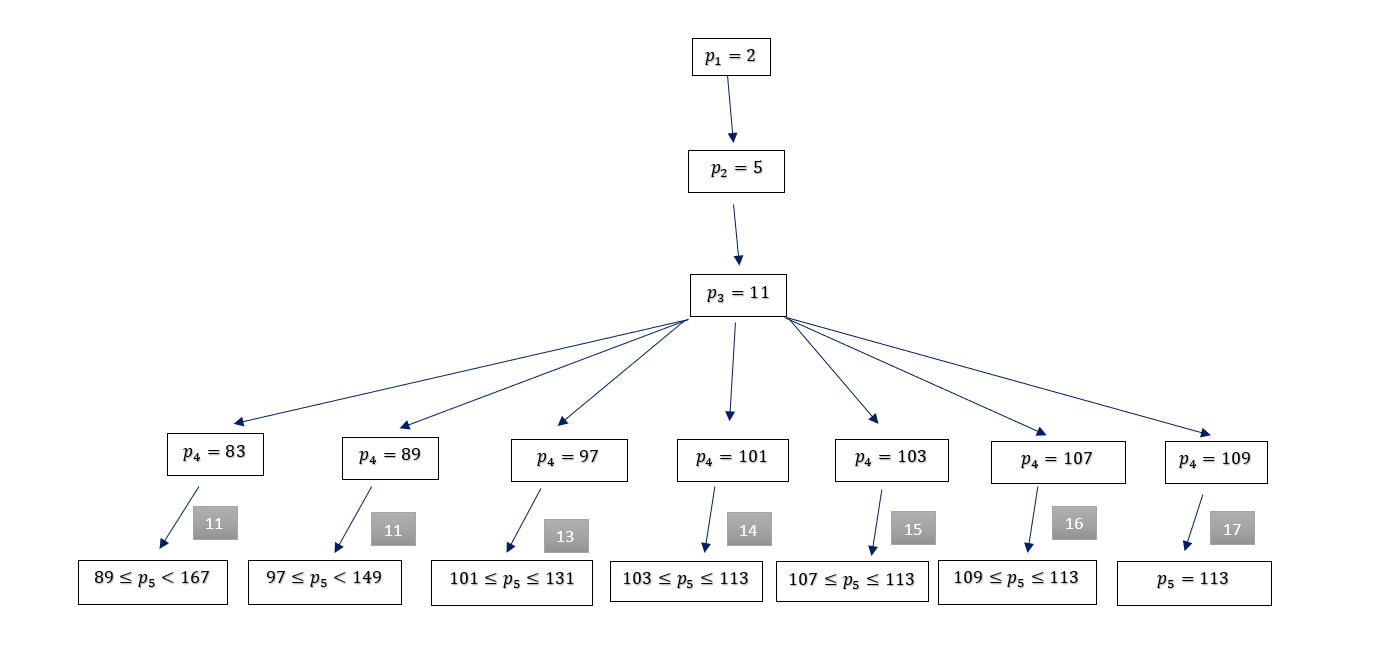}
    \caption{\textcolor{Mycolor1}{Possible subcases when $p_3=11$ (continue)}}
    \label{fig:enter-label}
\end{figure}

\textbf{Subcase 1.11:}
If $N=2\cdot 5^{2a_2}\cdot11^{2a_3}\cdot83^{2a_4}\cdot p_5^{2a_5}$, then $89\leq p_5\leq 167$. If $5 \mid \sigma(11^{2a_3})$ then immediately  $3221 \mid \sigma(11^{2a_3})$, which is impossible. Since $5 \nmid \sigma(83^{2a_4})$, implies $5 \mid \sigma(p_{5}^{2a_5})$ and $p_5 \equiv 1 \pmod{10}$. Note that, $7 \nmid \sigma(5^{2a_2}), \sigma(83^{2a_4})$. If $7 \mid \sigma(11^{2a_2})$ then immediately $19 \mid \sigma(11^{2a_3})$, which is impossible. Hence $7 \mid \sigma(p_5^{2a_5})$. Therefore, similar argument given in subcase 1.5 proves that, $N$ cannot be a friend of $20$.\\

\textbf{Subcase 1.12:}
If $N=2\cdot 5^{2a_2}\cdot11^{2a_3}\cdot89^{2a_4}\cdot p_5^{2a_5}$, then $97\leq p_5\leq 149$. If $5 \mid \sigma(11^{2a_3})$ then immediately  $3221 \mid \sigma(11^{2a_3})$, which is impossible. Since $5 \nmid \sigma(89^{2a_4})$, implies $5 \mid \sigma(p_{5}^{2a_5})$ and $p_5 \equiv 1 \pmod{10}$. Note that, $7 \nmid \sigma(5^{2a_2}), \sigma(89^{2a_4})$. If $7 \mid \sigma(11^{2a_3})$ then immediately $19 \mid \sigma(11^{2a_3})$, which is impossible. Hence $7 \mid \sigma(p_5^{2a_5})$. Therefore, similar argument given in subcase 1.5 proves that, $N$ cannot be a friend of $20$.\\

\textbf{Subcase 1.13:}
If $N=2\cdot 5^{2a_2}\cdot11^{2a_3}\cdot97^{2a_4}\cdot p_4^{2a_5}$, then $101\leq p_5\leq 131$. If $5 \mid \sigma(11^{2a_3})$ then immediately  $3221 \mid \sigma(11^{2a_3})$, which is impossible. Since $5 \nmid \sigma(97^{2a_4})$, implies $5 \mid \sigma(p_{5}^{2a_5})$ and $p_5 \equiv 1 \pmod{10}$. Note that, $7 \nmid \sigma(5^{2a_2}), \sigma(97^{2a_4})$. If $7 \mid \sigma(11^{2a_2})$ then immediately $19 \mid \sigma(11^{2a_3})$, which is impossible. Hence $7 \mid \sigma(p_5^{2a_5})$. But there is no prime $101\leq p_5\leq 131$ such that $5\cdot 7 \mid \sigma(p_5^{2a_5})$. This proves that, $N$ cannot be a friend of $20$.\\

\textbf{Subcase 1.14:}
If $N=2\cdot 5^{2a_2}\cdot11^{2a_3}\cdot101^{2a_4}\cdot p_5^{2a_5}$, then $103\leq p_5\leq 113$. If $5 \mid \sigma(11^{2a_3})$ then immediately  $3221 \mid \sigma(11^{2a_3})$, which is impossible. If $5 \mid \sigma(101^{2a_4})$ then immediately $491 \mid \sigma(101^{2a_4})$, implies $5 \mid \sigma(p_{5}^{2a_5})$ and $p_5 \equiv 1 \pmod{10}$. Note that, $7 \nmid \sigma(5^{2a_2}), \sigma(101^{2a_4})$. If $7 \mid \sigma(11^{2a_2})$ then immediately $19 \mid \sigma(11^{2a_3})$, which is impossible. Hence $7 \mid \sigma(p_5^{2a_5})$. But there is no prime $103\leq p_5\leq 131$ such that $5\cdot 7 \mid \sigma(p_5^{2a_5})$. This proves that, $N$ cannot be a friend of $20$.\\

\textbf{Subcase 1.15:}
If $N=2\cdot 5^{2a_2}\cdot11^{2a_3}\cdot103^{2a_4}\cdot p_5^{2a_5}$, then $107\leq p_5\leq 113$. If $5 \mid \sigma(11^{2a_3})$ then immediately  $3221 \mid \sigma(11^{2a_3})$, which is impossible. Since $5 \nmid \sigma(103^{2a_4})$, implies $5 \mid \sigma(p_{5}^{2a_5})$ and $p_5 \equiv 1 \pmod{10}$. Note that, $7 \nmid \sigma(5^{2a_2}), \sigma(103^{2a_4})$. If $7 \mid \sigma(11^{2a_2})$ then immediately $19 \mid \sigma(11^{2a_3})$ which is impossible. Hence $7 \mid \sigma(p_5^{2a_5})$. There is no prime $107\leq p_5\leq 131$ such that $5\cdot 7 \mid \sigma(p_5^{2a_5})$. This proves, $N$ cannot be a friend of $20$.\\

\textbf{Subcase 1.16:}
If $N=2\cdot 5^{2a_2}\cdot11^{2a_3}\cdot107^{2a_4}\cdot p_5^{2a_5}$, then $109\leq p_5\leq 113$. If $5 \mid \sigma(11^{2a_3})$ then immediately  $3221 \mid \sigma(11^{2a_3})$, which is impossible. Since $5 \nmid \sigma(107^{2a_4})$, implies $5 \mid \sigma(p_{5}^{2a_5})$ and $p_5 \equiv 1 \pmod{10}$. Note that, $7 \nmid \sigma(5^{2a_2})$. If $7 \mid \sigma(11^{2a_2})$ then immediately $19 \mid \sigma(11^{2a_3})$, which is impossible. If $7 \mid \sigma(107^{2a_4})$ then immediately $13 \mid \sigma(107^{2a_4})$, which is impossible. Hence $7 \mid \sigma(p_5^{2a_5})$. But there is no prime $109\leq p_5\leq 131$ such that $5\cdot 7 \mid \sigma(p_5^{2a_5})$. This proves , $N$ cannot be a friend of $20$.\\

\textbf{Subcase 1.17:}
Assume that $N=2\cdot 5^{2a_2}\cdot11^{2a_3}\cdot 109^{2a_4}\cdot113^{2a_5}$. If $5 \mid \sigma(11^{2a_3})$ then immediately  $3221 \mid \sigma(11^{2a_3})$, which is impossible. Since $5 \nmid \sigma(109^{2a_4})$, implies $5 \mid \sigma(113^{2a_5})$, which is absurd since $113 \not \equiv 1 \pmod {10}$. Therefore this proves , $N$ cannot be a friend of $20$\\
\begin{center}
\textbf{\textcolor{RedViolet}{Case 2.}}\\
\textbf{\textcolor{blue}{For $p_3=13$}}\\
\end{center}

If $p_3=13$ then $17 \leq p_4\leq 59$. Note that for $p_4=17,19,23$, we have 
$I(N)>\frac{21}{10}.$
Therefore, we have to consider $29\leq p_4\leq 59$. The following figure describes the possible subcases when $p_3=13$.

\begin{figure}[h]
    \centering
    \includegraphics[width=1\linewidth]{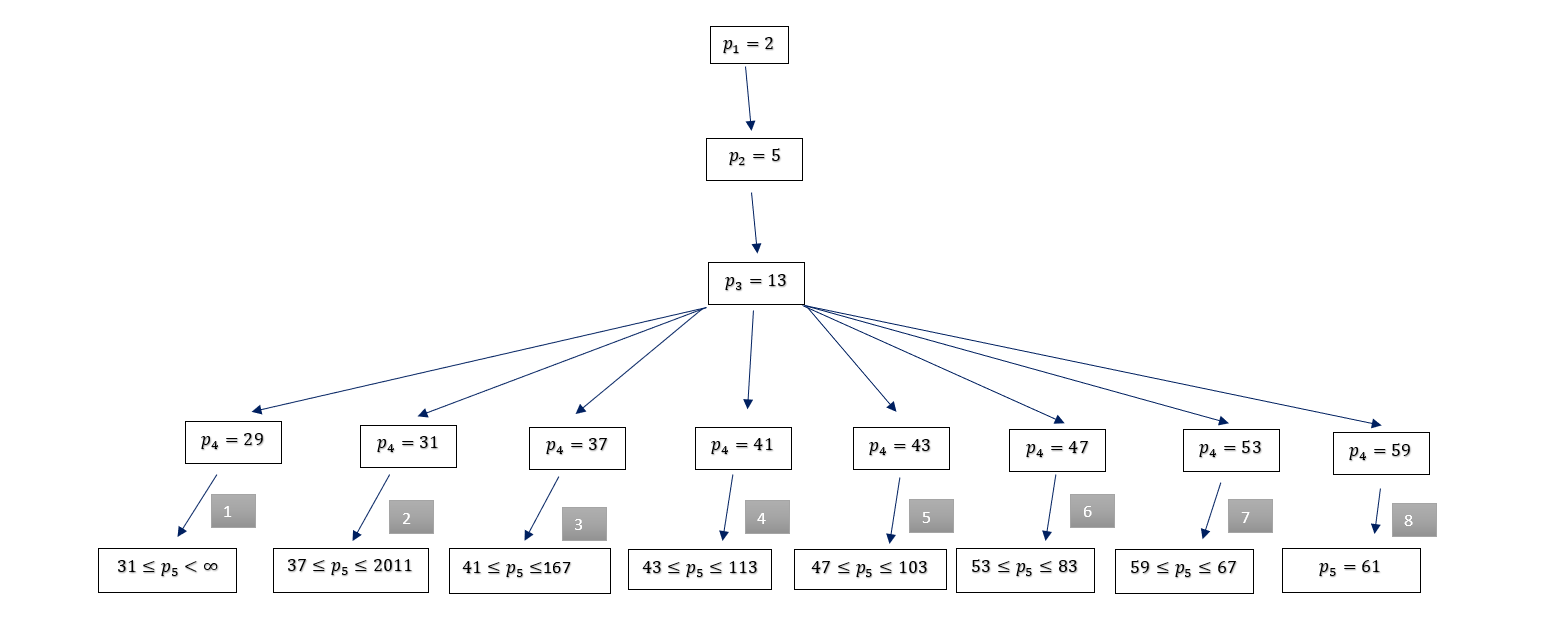}
    \caption{\textcolor{Mycolor1}{Possible subcases when $p_3=13$}}
    \label{fig:enter-label}
\end{figure}

\textbf{Subcase 2.1:}
If $N=2\cdot 5^{2a_2}\cdot13^{2a_3}\cdot29^{2a_4}\cdot p_5^{2a_5}$, then $p_5>29$. Similar argument given in subcase 1.1 forces $a_2=1$. For $a_2=1$, as $\sigma(5^2)=31$, it follows that $p_5=31$. But then 
$$I(N)>\frac{21}{10}.$$
Therefore, this case is impossible.\\

\textbf{Subcase 2.2:} 
If $N=2\cdot 5^{2a_2}\cdot13^{2a_3}\cdot31^{2a_4}\cdot p_5^{2a_5}$, then $37\leq p_5\leq 2011$. Note, $5 \nmid \sigma(13^{2a_3})$. If $5 \mid \sigma(31^{2a_4})$ then immediately $17351 \mid \sigma(31^{2a_4})$, which is impossible as $p_5\leq 2011$. Hence $5 \mid \sigma(p_5^{2a_5})$. Again note that, $13 \nmid \sigma(5^{2a_2})$ and $13 \nmid \sigma(31^{2a_4})$, implies $13 \mid \sigma(p_5^{2a_5})$. Also, $7 \nmid \sigma(5^{2a_2}), \sigma(13^{2a_3}),\sigma(31^{2a_4})$, which implies $7 \mid \sigma(p_5^{2a_5})$. So, we have $5 \cdot 7\cdot 13 \mid \sigma(p_5^{2a_5})$. Hence $p_5 \in$ \{191, 211, 911, 991, 2011\}.
But note $5,13 \nmid \sigma(31^{2a_4})$, implies $p_5 \mid \sigma(31^{2a_4})$. 
Hence $p_5 \in$ \{911, 991, 2011\}. If $911 \mid \sigma(31^{2a_4})$ then immediately $11 \mid \sigma(31^{2a_4})$, which is absurd. If $991 \mid \sigma(31^{2a_4})$ then immediately $3 \mid \sigma(31^{2a_4})$, which is absurd.
If $2011 \mid \sigma(31^{2a_4})$ then immediately $11 \mid \sigma(31^{2a_4})$, which is absurd. Hence this case is impossible.\\

\textbf{Subcase 2.3:}
If $N=2\cdot 5^{2a_2}\cdot13^{2a_3}\cdot37^{2a_4}\cdot p_5^{2a_5}$, then $41\leq p_5\leq 167$. Note that, if $5\mid \sigma(q^{2a_q})$, then $q\equiv1\pmod{10}$ due to Corollary \ref{cor12}, therefore there must exists a prime $p\equiv 1\pmod{10}$, it follows that $p_5\in$\{41, 61, 71, 101, 131, 151\}. But $13, 37, 41, 61 \nmid \sigma(5^{2a_2})$, therefore  $p_5$ must be one of $71, 101, 131, 151$.  But $11\mid \sigma(5^{2a_2})$ whenever $p_5=71, 101, 131, 151\mid \sigma(5^{2a_2})$.
Hence this case is impossible.\\

\textbf{Subcase 2.4:}
If $N=2\cdot 5^{2a_2}\cdot13^{2a_3}\cdot41^{2a_4}\cdot p_5^{2a_5}$, then $43\leq p_5\leq 113$. Since $13,41 \nmid \sigma(5^{2a_2})$, therefore we must that $p_5\mid \sigma(5^{2a_2})$. Therefore $p_5\in \{59, 71, 79, 101, 109\}$. All the cases except when $p_5\in\{59, 79, 109\}$ have already been discussed in subcase 2.3. Now if $p_5=59\mid \sigma(5^{2a_2})$ then $35671\mid \sigma(5^{2a_2})$, if $p_5=79\mid \sigma(5^{2a_2})$, then $31\mid \sigma(5^{2a_2})$ and if $p_5=109\mid \sigma(5^{2a_2})$, then $19\mid \sigma(5^{2a_2})$, therefore, for all the possible values of $p_5$, we get absurd result. Hence this case is impossible.\\

 \textbf{Subcase 2.5:}
If $N=2\cdot 5^{2a_2}\cdot13^{2a_3}\cdot43^{2a_4}\cdot p_5^{2a_5}$, then $47\leq p_5\leq 103$. Since there must exists a prime $p\equiv 1\pmod{10}$, it follows that $p_5 \in$ \{61, 71, 101\}. But $13, 43 \nmid \sigma(5^{2a_2})$, therefore we must that $p_5\mid \sigma(5^{2a_2})$. Since $p_5=61\nmid \sigma(5^{2a_2})$, we have 
 $p_5=71~\text{or,}~101\mid \sigma(5^{2a_2})$. By essentially
identical logic to subcase 2.3, we conclude that this case is impossible.\\

\textbf{Subcase 2.6:}
If $N=2\cdot 5^{2a_2}\cdot13^{2a_3}\cdot47^{2a_4}\cdot p_5^{2a_5}$, then $53\leq p_5\leq 83$. Since there must exists a prime $p\equiv 1\pmod{10}$, it follows that $p_5\in$\{61,71\}. By essentially
identical logic to subcase 2.3, we conclude that this case is impossible.\\

\textbf{Subcase 2.7:}
If $N=2\cdot 5^{2a_2}\cdot13^{2a_3}\cdot53^{2a_4}\cdot p_5^{2a_5}$, then $59\leq p_5\leq 67$. Since there must exists a prime $p\equiv 1\pmod{10}$, it follows that $p_5=61$. But $13, 53, 61\nmid \sigma(5^{2a_2})$. Hence this case is impossible.\\

\textbf{Subcase 2.8:}
Assume that $N=2\cdot 5^{2a_2}\cdot13^{2a_3}\cdot59^{2a_4}\cdot 61^{2a_5}$. Since $13,61\nmid \sigma(5^{2a_2}) $, it follows that $59\mid \sigma(5^{2a_2})$ but it forces $35671\mid \sigma(5^{2a_2})$. Therefore, this case is impossible.\\

\begin{center}
  \textbf{\textcolor{RedViolet}{
  Case 3.}}  \\
  \textbf{\textcolor{blue}{For $p_3=17$}}\\
\end{center}

If $p_3=17$ then $19 \leq p_4\leq 31$. The following figure describes the possible subcases when $p_3=17$.

\begin{figure}[h]
    \centering
    \includegraphics[width=1\linewidth]{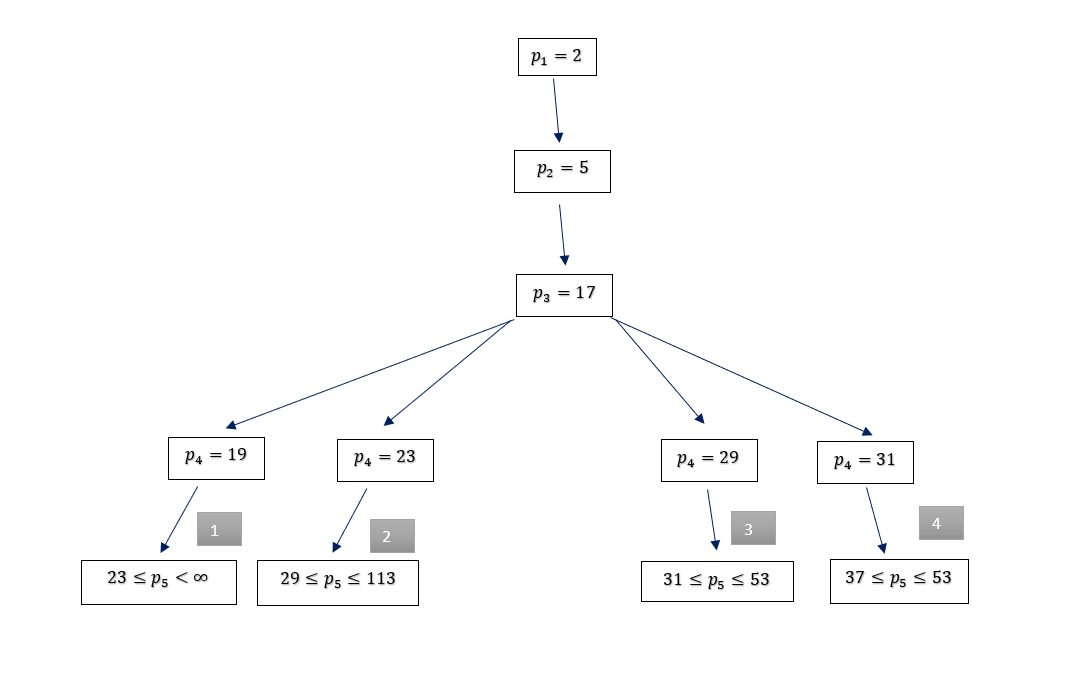}
    \caption{\textcolor{Mycolor1}{Possible subcases when $p_3=17$}}
    \label{fig:enter-label}
\end{figure}

\textbf{Subcase 3.1:}
If $N=2\cdot 5^{2a_2}\cdot17^{2a_3}\cdot19^{2a_4}\cdot p_5^{2a_5}$, then $p_5>19$. Similar argument given in subcase 1.1 forces $a_2=1$. For $a_2=1$, as $\sigma(5^2)=31$, it follows that $p_5=31$. But then 
$$I(N)>\frac{21}{10}.$$
Therefore, this case is impossible.\\

\textbf{Subcase 3.2:}
If $N=2\cdot 5^{2a_2}\cdot17^{2a_3}\cdot23^{2a_4}\cdot p_5^{2a_5}$, then $29\leq p_5\leq 113$. Since there must exists a prime $p\equiv 1\pmod{10}$, it follows that $p_5\in$ \{31, 41, 61, 71, 101\}. But for $p_5=31,41$ we have
$$I(N)>\frac{21}{10}.$$
Therefore $p_5\in$\{61, 71, 101\}. Since  $17,23\nmid \sigma(5^{2a_2})$ , we must have $p_5\mid \sigma(5^{2a_2})$. Therefore $p_5$ can be $71$ or $101$. Note that $11\mid \sigma(5^{2a_2})$ whenever $101\mid \sigma(5^{2a_2})$ or $71\mid \sigma(5^{2a_2})$. Therefore, this case is impossible.\\

\textbf{Subcase 3.3:}
If $N=2\cdot 5^{2a_2}\cdot17^{2a_3}\cdot29^{2a_4}\cdot p_5^{2a_5}$, then $31\leq p_5\leq 53$. Note that $17\mid \sigma(N)$, but $17\nmid 5^{2a_2}, 29^{2a_4}, p_5^{2a_5}$, for any $31\leq p_5\leq 53$. Hence this case is impossible.\\

\textbf{Subcase 3.4:}
If $N=2\cdot 5^{2a_2}\cdot17^{2a_3}\cdot31^{2a_4}\cdot p_5^{2a_5}$, then $37\leq p_5\leq 53$. This subcase is also impossible by the similar argument given in subcase 3.3.\\

\begin{center}
\textbf{\textcolor{RedViolet}{Case 4.}} \\
\textbf{\textcolor{blue}{For $p_3=19$}}\\
\end{center}
If $p_3=19$ then $23 \leq p_4\leq 31$. The following figure describes the possible subcases when $p_3=19$.
\begin{figure}[h]
    \centering
    \includegraphics[width=0.65\linewidth]{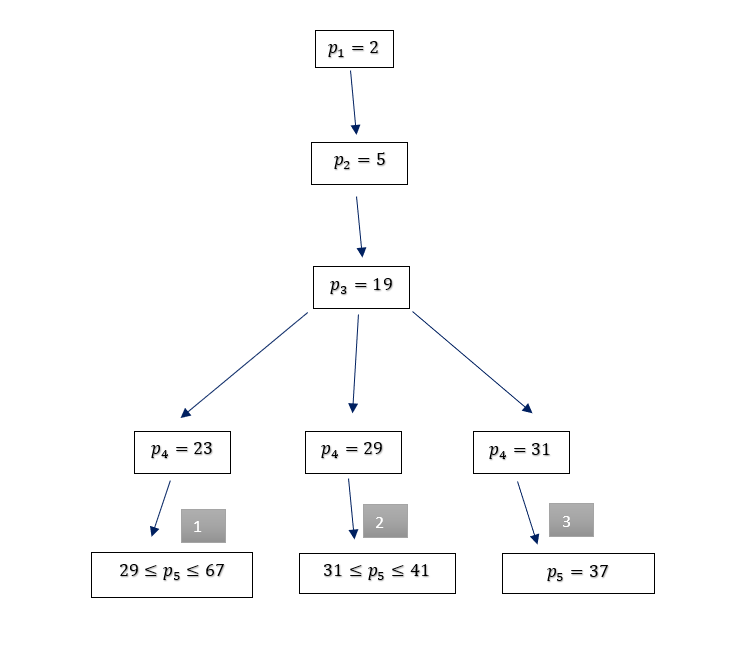}
    \caption{\textcolor{Mycolor1}{Possible subcases when $p_3=19$}}
    \label{fig:enter-label}
\end{figure}

\textbf{Subcase 4.1:}
If $N=2\cdot 5^{2a_2}\cdot19^{2a_3}\cdot23^{2a_4}\cdot p_5^{2a_5}$, then $29\leq p_5\leq 67$. Since there must exists a prime $p\equiv 1\pmod{10}$, it follows that $p_5\in$ \{31, 41, 61\}. If $19\mid \sigma(5^{2a_2})$, then $829\mid \sigma(5^{2a_2})$, therefore $19\nmid  \sigma(5^{2a_2})$. As $23\nmid  \sigma(5^{2a_2})$, it follows that $p_5\mid  \sigma(5^{2a_2})$. Note that, $p_5=31$ as for any other $p_5=41, 61$, $19, 23, p_5\nmid  \sigma(5^{2a_2})$. Note that, $7\mid  \sigma(23^{2a_4})$ only but it forces, $ 79\mid \sigma(23^{2a_4})$. Therefore, this case is also impossible.\\

\textbf{Subcase 4.2:}
If $N=2\cdot 5^{2a_2}\cdot19^{2a_3}\cdot29^{2a_4}\cdot p_5^{2a_5}$, then $31\leq p_5\leq 41$. Since there must exists a prime $p\equiv 1\pmod{10}$, it follows that $p_5\in$\{31, 41\}. If $19\mid \sigma(5^2a_2)$, then $829\mid \sigma(5^{2a_2})$, therefore $19\nmid  \sigma(5^{2a_2})$. As $29\nmid  \sigma(5^{2a_2})$, it follows that $p_5\mid  \sigma(5^{2a_2})$. Note that $p_5=31$ as for $p_5=41$, $19, 23, p_5\nmid  \sigma(5^{2a_2})$. Since, $7\mid  \sigma(29^{2a_4})$ only but it forces $ 88009573 \mid \sigma(29^{2a_4})$. Hence this case is impossible.\\

\textbf{Subcase 4.3:}
Assume that $N=2\cdot 5^{2a_2}\cdot19^{2a_3}\cdot31^{2a_4}\cdot 37^{2a_4}$. Note that $7\mid  \sigma(37^{2a_5})$ only but it forces $67\mid \sigma(37^{2a_5})$. Therefore, this case is impossible.\\

This proves that any friend $N$ of $20$ has at least six distinct prime divisors.
\end{proof}

The proof of the Theorem  \ref{thm 1.1} now follows from Lemma \ref{nlemma}, Lemma \ref{lemma17} and Lemma \ref{lemma19}.

    \subsection{Proof of Theorem \ref{thm 1.2}.}
We begin with the following lemmas:
\begin{lemma}\label{lem3.3}
    Let a function $\psi: [1,\infty)	\rightarrow\mathbb{R}$ defined by $\psi(x)=ax-a^x$ then $\psi$ is a strictly decreasing function of $x$ in $[1,\infty)$ for all $a>e$.
\end{lemma}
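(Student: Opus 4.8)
The plan is to show that $\psi$ has strictly negative derivative throughout $[1,\infty)$, which immediately yields strict monotonic decrease. First I would differentiate: since $\frac{d}{dx}a^x = a^x\ln a$, we obtain
\[
\psi'(x) = a - a^x\ln a.
\]
The entire task then reduces to verifying that $a^x\ln a > a$ for every $x\ge 1$ whenever $a>e$.

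For this I would isolate the two elementary facts that drive the inequality. First, because $a>e$, we have $\ln a>1$. Second, because $a>1$, the map $x\mapsto a^x$ is increasing, so $a^x\ge a^1=a$ for all $x\ge 1$. Combining these,
\[
a^x\ln a \ge a\cdot\ln a > a\cdot 1 = a,
\]
so that $\psi'(x) = a - a^x\ln a < 0$ on all of $[1,\infty)$. Since a function with strictly negative derivative on an interval is strictly decreasing there, the conclusion follows at once.

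There is no substantial obstacle here; the only point requiring care is that \emph{both} hypotheses are genuinely needed and each is used exactly once: $a>e$ to force $\ln a>1$, and $x\ge 1$ to force $a^x\ge a$. I would add a short remark noting that these are sharp, since for $a\le e$ one has $\psi'(1)=a(1-\ln a)\ge 0$ (so the restriction $a>e$ cannot be relaxed), while on $(0,1)$ the bound $a^x\ge a$ fails (so the domain must start at $1$). Recording this makes transparent why the lemma is stated with precisely these constraints, which is presumably what the later application to the abundancy estimates will require.
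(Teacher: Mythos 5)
Your proof is correct and takes the same route as the paper's: differentiate and show $\psi'(x)=a-a^{x}\ln a<0$ on $[1,\infty)$, except that the paper merely asserts this negativity as clear, while you justify it properly via $a^{x}\ge a$ (from $x\ge 1$) and $\ln a>1$ (from $a>e$). One minor caveat on your sharpness aside only: for $a=e$ the function $\psi(x)=ex-e^{x}$ is in fact still strictly decreasing on $[1,\infty)$, since $\psi'$ vanishes only at the single point $x=1$, so the hypothesis $a>e$ is convenient rather than strictly necessary --- but this does not affect the validity of your proof of the stated lemma.
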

\begin{proof}
     Let $\psi: [1,\infty)	\rightarrow\mathbb{R}$ is defined by $\psi(x)=ax-a^x$, then clearly
     \begin{align*}
         \psi'(x)=a-a^{x} \log a < 0; ~~~~\forall x \in [1,\infty)~ \text{and}~ \forall a > e.
            \end{align*}
            Hence, $\psi$ is a strictly decreasing function of $x$ in $[1,\infty)$ for all $a>e$.
\end{proof}
\begin{lemma}\label{lem3.4}
    Let $(c_{1}, c_{2}, ... , c_{k})
    $ be any partition of $n$ i.e; $\sum_{i=1}^{k} c_{i}=n$ for $1 \leq k \leq n$ and all $c_{i} \in \mathbb{N}$, then for any integer $a>e$ we have $$an < \sum_{i=1}^{k} a^{c_i}.$$
  
\end{lemma}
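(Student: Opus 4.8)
The plan is to reduce the claimed inequality to a termwise comparison governed by the function $\psi$ of Lemma \ref{lem3.3}. Since $(c_1,\dots,c_k)$ is a partition of $n$, we have $an = a\sum_{i=1}^{k} c_i = \sum_{i=1}^{k} a c_i$, so the asserted inequality $an < \sum_{i=1}^{k} a^{c_i}$ is equivalent to
$$\sum_{i=1}^{k}\bigl(a c_i - a^{c_i}\bigr) < 0.$$
But $a c_i - a^{c_i}$ is exactly $\psi(c_i)$ in the notation of Lemma \ref{lem3.3}, so it suffices to prove $\sum_{i=1}^{k}\psi(c_i) < 0$.

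First I would record the value $\psi(1) = a\cdot 1 - a^{1} = 0$. Because each part satisfies $c_i \ge 1$ (as $c_i \in \mathbb{N}$) and, by Lemma \ref{lem3.3}, $\psi$ is strictly decreasing on $[1,\infty)$ whenever $a > e$, we obtain $\psi(c_i) \le \psi(1) = 0$ for every $i$, with equality precisely when $c_i = 1$. Summing over $i$ then yields $\sum_{i=1}^{k}\psi(c_i) \le 0$, which already gives the weak bound $an \le \sum_{i=1}^{k} a^{c_i}$ with no computation beyond the monotonicity established earlier.

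The delicate point, and the main obstacle, is upgrading this to the \emph{strict} inequality. Strictness follows as soon as at least one part satisfies $c_i \ge 2$: that term contributes $\psi(c_i) < 0$ while every other term stays $\le 0$, forcing $\sum_{i}\psi(c_i) < 0$. This holds unless every part equals $1$, i.e. unless $k = n$; in that degenerate all-ones partition each term vanishes and one has only equality. Thus the argument delivers the strict inequality exactly for the nontrivial partitions having some $c_i \ge 2$ (equivalently $1 \le k \le n-1$), which is the regime in which the estimate is subsequently used, and I would flag this nontriviality explicitly. Modulo that caveat, the proof is complete, since everything reduces to the strict monotonicity of $\psi$ together with the single evaluation $\psi(1)=0$.
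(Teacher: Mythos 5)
Your argument follows the same route as the paper's own proof --- reduce the claim to the sign of $\sum_{i=1}^{k}\psi(c_i)$ with $\psi(x)=ax-a^x$ and invoke the strict monotonicity from Lemma \ref{lem3.3} --- but you handle the endpoint correctly where the paper does not, and your ``caveat'' is in fact a genuine correction. The paper's proof asserts, citing Lemma \ref{lem3.3}, that $ac_i<a^{c_i}$ for \emph{every} $c_i\geq 1$; but since $\psi(1)=0$, strict decrease on $[1,\infty)$ only yields $ac_i<a^{c_i}$ for $c_i\geq 2$, with equality at $c_i=1$. Consequently the lemma as printed is false for the all-ones partition $k=n$, where $\sum_{i=1}^{k}a^{c_i}=an$ exactly --- precisely the equality case you isolate. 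The one point where your assessment goes astray is the parenthetical claim that the strict regime ($1\leq k\leq n-1$) ``is the regime in which the estimate is subsequently used'': in the proof of Lemma \ref{o1} the bound is applied to \emph{all} partitions of $2a-1$, and the minimum $8a-4$ of $\mathcal{H}_{2a-1,5}$ is attained exactly at the all-ones partition (as the remark following Lemma \ref{o1} records). What that application actually needs is your weak bound $an\leq\sum_{i=1}^{k}a^{c_i}$ together with your characterization of equality (all $c_i=1$); the strict form as stated would show every element of $\mathcal{H}_{2a-1,5}$ strictly exceeds $8a-4$, contradicting Lemma \ref{o1} itself. So your proof, read as establishing the corrected statement ``$an\leq\sum_{i=1}^{k}a^{c_i}$, with equality if and only if $c_i=1$ for all $i$,'' is the right repair, and Lemma \ref{o1} goes through unchanged with it.
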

\begin{proof}
By Lemma \ref{lem3.3}, for each $c_{i} \geq 1$ and for each $a > e$, we have
\begin{align}\label{3.3}
ac_{i}<a^{c_i}.
\end{align}
Since $(c_{1}, c_{2}, ... , c_{k})
    $ is a partition of $n$, we have $\displaystyle{
    a\sum_{i=1}^{k}c_{i}=an}$. Now from (\ref{3.3}), we have
    \begin{align*}
    a\sum_{i=1}^{k}c_{i}=an < \sum_{i=1}^{k}a^{c_{i}}.
    \end{align*}
    This completes the prove.
\end{proof}

 \begin{lemma}\label{o1}
The minimum element in the set $\mathcal{H}_{2a-1,5}$ is $8a-4$.
  
\end{lemma}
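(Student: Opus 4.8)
The plan is to rewrite every element of $\mathcal{H}_{2a-1,5}$ in a form that makes the optimization transparent, and then to reduce the whole problem to a single elementary inequality applied termwise. The starting observation is pure bookkeeping: if $(c_1,\dots,c_r)$ is a partition of $2a-1$ into $r$ positive parts, then the associated element of $\mathcal{A}_{2a-1,5}(r)$ is
\[
\sum_{i=1}^{r}5^{c_i}-r=\sum_{i=1}^{r}\bigl(5^{c_i}-1\bigr),
\]
since the subtracted $r$ distributes one unit to each of the $r$ summands. Thus minimizing over $\mathcal{H}_{2a-1,5}$ amounts to minimizing $\sum_{i=1}^{r}(5^{c_i}-1)$ over all $r$ and all partitions of $2a-1$ into $r$ positive parts.

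The key step is the per-part bound
\[
5^{c}-1\ge 4c\qquad\text{for every integer }c\ge 1,
\]
which is immediate from the binomial expansion $5^{c}=(1+4)^{c}\ge 1+4c$ (equivalently Bernoulli's inequality), and in which equality holds precisely when $c=1$. Summing this over $i$ and using $\sum_{i=1}^{r}c_i=2a-1$ gives
\[
\sum_{i=1}^{r}\bigl(5^{c_i}-1\bigr)\ \ge\ 4\sum_{i=1}^{r}c_i\ =\ 4(2a-1)\ =\ 8a-4,
\]
so every element of $\mathcal{H}_{2a-1,5}$ is at least $8a-4$.

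It then remains to exhibit $8a-4$ as an actual element, i.e. to check that the lower bound is attained. I would take $r=2a-1$ together with the all-ones partition $c_1=\dots=c_{2a-1}=1$; then each term $5^{c_i}-1$ equals $4$, the termwise inequality becomes an equality, and the corresponding element of $\mathcal{A}_{2a-1,5}(2a-1)\subseteq\mathcal{H}_{2a-1,5}$ is exactly $(2a-1)\cdot 4=8a-4$. Combining the two halves shows that $8a-4$ is the minimum.

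There is essentially no serious obstacle here; the only point demanding a little care is the equality analysis. Note that the strict form of Lemma \ref{lem3.4}, namely $5\cdot(2a-1)<\sum_{i=1}^{r}5^{c_i}$, fails exactly on the all-ones partition, which is the minimizing configuration, so I deliberately use the non-strict inequality $5^{c}\ge 1+4c$ rather than invoking Lemma \ref{lem3.4}. An alternative, equivalent route is to observe that $c\mapsto 5^{c}-1$ is strictly superadditive, since $(5^{c_1}-1)(5^{c_2}-1)>0$ rearranges to $5^{c_1+c_2}-1>(5^{c_1}-1)+(5^{c_2}-1)$; hence merging any two parts strictly increases $\sum_{i}(5^{c_i}-1)$, so the minimum is forced onto the finest partition, which is again the all-ones one.
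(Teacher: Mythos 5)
Your proof is correct, and while it rests on the same key idea as the paper's (a termwise linear lower bound on $5^{c}$ summed over a partition of $2a-1$), it differs in a way that matters: it actually repairs two defects in the paper's own argument. The paper's proof invokes Lemma \ref{lem3.4} to get the strict inequality $5(2a-1)<\sum_{i=1}^{k}5^{c_i}$, combines it with $k\le 2a-1$ to conclude $4(2a-1)<\sum_{i=1}^{k}5^{c_i}-k$, and stops. But the strict inequality of Lemma \ref{lem3.4} (built on $\psi(x)=5x-5^{x}$ being strictly decreasing, Lemma \ref{lem3.3}, where $\psi(1)=0$) fails at parts $c_i=1$, since $5c_i=5^{c_i}$ there; in particular it fails for the all-ones partition, which is precisely the minimizer. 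Taken literally, the paper's chain asserts that \emph{every} element of $\mathcal{H}_{2a-1,5}$ strictly exceeds $8a-4$, which is inconsistent with $8a-4$ being the minimum; moreover the paper never verifies attainment inside the proof (it is only asserted in the subsequent remark that the minimum comes from $\mathcal{A}_{2a-1,5}(2a-1)$). Your use of the non-strict Bernoulli bound $5^{c}\ge 1+4c$, with equality exactly at $c=1$, together with the explicit witness $c_1=\dots=c_{2a-1}=1$ giving $(2a-1)\cdot 4=8a-4$, supplies both halves of the claim cleanly — you were right to flag that the strict form of Lemma \ref{lem3.4} must be avoided on the minimizing configuration. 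Your superadditivity remark ($5^{c_1+c_2}-1>(5^{c_1}-1)+(5^{c_2}-1)$) goes a step further than the paper, showing the all-ones partition is the \emph{unique} minimizer, which the paper does not address.
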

\begin{proof}
   If $\displaystyle{\sum_{i=1}^{k}c_{i}=2a-1}$, then from Lemma \ref{lem3.4}, we get
   \begin{align}\label{0}
   5(2a-1)<\sum_{i=1}^{k}5^{c_i}.
   \end{align}
   Now since $k \leq 2a-1$, from (\ref{0}) we have $\displaystyle{5(2a-1)-(2a-1)<\sum_{i=1}^{k}5^{c_i}-k}$. Thus, 
   
    \begin{align*}
   4(2a-1)<\sum_{i=1}^{k}5^{c_i}-k.
   \end{align*}
   This completes the prove.
\end{proof}
\begin{remark}
The minimum element in the set $\mathcal{H}_{2a-1,5}$ is from the set $\mathcal{A}_{2a-1,5}(2a-1)$.
\end{remark}

Now we have enough materials to prove the theorem. Let $N=2\cdot5^{2a}\cdot \displaystyle{\prod_{i=1}^{s-2}p_{i}^{2\gamma_{i}}}$ be a friend of 20 with $\omega(N)=s$, then $\frac{\sigma(N)}{N}=\frac{21}{10}$
 which implies,
 \begin{align}\label{o4.2}
 \sigma(5^{2a})\prod_{i=1}^{s-2}\sigma(p_{i}^{2\gamma_{i}})=7\cdot 5^{2a-1} \prod_{i=1}^{s-2}p_{i}^{2\gamma_{i}}.
 \end{align}\\
 In the right hand side of (\ref{o4.2}), there are $2a-1$ numbers of 5. Now we consider
 the problem case by case.
 
\textbf{\textcolor{RedViolet}{Case 1.}}
There is only one prime divisor of $N$ (without loss of generality, we may assume $p_{1}$) such that $5^{2a-1}~\|~\sigma(p_{1}^{2\gamma_{1}})$ i.e; $|\mathcal{F}_5(N)|=1$ and $p_1 \equiv 1 \pmod {10}$. Clearly, $v_{5}(\sigma(p_{1}^{2\gamma_{1}}))=2a-1$ where $2\gamma_{1}+1 \equiv 0 \pmod {5^{2a-1}}.$

   \textbf{\textcolor{RedViolet}{Case 2.}}
    There are two prime divisors of $N$ (without loss of generality, we may assume $p_{1}, p_{2}$) such that $5^{2a-1}~\|~\sigma(p_{1}^{2\gamma_{1}})\sigma(p_{2}^{2\gamma_{2}})$ i.e; $|\mathcal{F}_5(N)|=2$ and $p_1 \equiv p_2 \equiv 1 \pmod {10}$. Clearly, $v_{5}(\sigma(p_{1}^{2\gamma_{1}}))+ v_{5}(\sigma(p_{2}^{2\gamma_{2}}))=2a-1$, where $2\gamma_{i}+1 \equiv 0 \pmod {5^{v_{5}(\sigma(p_{i}^{2\gamma_{i}}))}}$ for $i=1, 2$ . 
   
   Continuing like this manner we have case $2a-2$.
   
\textbf{\textcolor{RedViolet}{Case 2a-2.}} There are $2a-2$ numbers of prime divisors of $N$ (without loss of generality, we may assume $p_{1}, p_{2}$, $p_{3}$,..., $p_{2a-2}$) such that $$5^{2a-1}~\|~\sigma(p_{1}^{2\gamma_{1}})\sigma(p_{2}^{2\gamma_{2}})\cdots \sigma(p_{2a-2}^{2\gamma_{2a-2}})$$ i.e; $|\mathcal{F}_5(N)|=2a-2$ and $p_1 \equiv p_2 \equiv \cdots \equiv p_{2a-2} \equiv 1 \pmod {10}$. Clearly, $\displaystyle{\sum_{i=1}^{2a-2}v_{5}(\sigma(p_{i}^{2\gamma_{i}}))=2a-1}$, where $2\gamma_{i}+1 \equiv 0 \pmod {5^{v_{5}(\sigma(p_{i}^{2\gamma_{i}}))}}$ for $i=1, 2,..., 2a-2 $ . In this case, all $v_{5}(\sigma(p_{k_i}^{2\gamma_{i}}))=1$ except one $v_{5}(\sigma(p_{j}^{2\gamma_{j}}))$ which is 2.

      \textbf{\textcolor{RedViolet}{Case 2a-1.}} There are exactly  $2a-1$ numbers of prime divisors of $N$ (without loss of generality, we may assume $p_{1}$, $p_{2}$, $p_{3}$,..., $p_{2a-1}$) such that $$5^{2a-1}~\|~\sigma(p_{1}^{2\gamma_{1}})\sigma(p_{2}^{2\gamma_{2}})\cdots \sigma(p_{2a-1}^{2\gamma_{2a-1}})$$ i.e; $|\mathcal{F}_5(N)|=2a-1$ and $p_1 \equiv p_2 \equiv \cdots \equiv p_{2a-1} \equiv 1 \pmod {10}$. Clearly, $\displaystyle{\sum_{i=1}^{2a-1}v_{5}(\sigma(p_{i}^{2\gamma_{i}}))=2a-1}$, where $2\gamma_{i}+1 \equiv 0 \pmod {5^{v_{5}(\sigma(p_{i}^{2\gamma_{i}}))}}$ for $i=1, 2,..., 2a-1$. In this case, all $v_{5}(\sigma(p_{i}^{2\gamma_{i}}))=1.$
      
If we consider all the cases mentioned above and choose the minimum value of $\displaystyle{\sum_{i=1}^{t}2\gamma_{i}}$ such that $5^{2a-1}~\|~\sigma(p_{1}^{2\gamma_{1}})\sigma(p_{2}^{2\gamma_{2}})\sigma(p_{3}^{2\gamma_{3}})\cdots \sigma(p_{t}^{2\gamma_{t}})$ satisfying $\displaystyle{\sum_{i=1}^{t}v_{5}(\sigma(p_{i}^{2\gamma_{i}}))=2a-1}$, for $1\leq t \leq |\mathcal{F}_5(N)|=2a-1$ and while excluding the primes with exponent 2 that are not included in those cases (except 2, since 2 has exponent 1), we can obtain the minimum possible value for $\Omega(N)$.

To find the minimum value of $ \displaystyle{\sum_{i=1}^{t}2\gamma_{i}}$, it is enough to consider  $2\gamma_j=5^{v_{5}(\sigma(p_{j}^{2\gamma_{j}}))}-1$  since $2 \gamma_{i}+1 \equiv 0 \pmod{v_{5}(\sigma(p_{i}^{2\gamma_{i}}))}$, for $j=1, 2,..., t$.

A careful calculation gives the minimum value as following: 
\begin{align}\label{6}
\sum_{i=1}^{t}2\gamma_{i}=\sum_{i=1}^{t}\bigg(5^{v_{5}(\sigma(p_{i}^{2\gamma_{i}}))}-1\bigg)=\sum_{i=1}^{t}5^{v_{5}(\sigma(p_{i}^{2\gamma_{i}}))}-t.
\end{align}
 Now, the sum in (\ref{6}) is same as the minimum element in the set $\mathcal{H}_{2a-1,5}$.
 
  From Lemma \ref{o1} we know that the minimum element in the set $\mathcal{H}_{2a-1,5}$ is $8a-4$. Hence, the minimum value of $\Omega(N)$ is $2\omega(N)+6a-5$ i.e.; $\Omega(N) \geq 2\omega(N)+6a-5$. This completes the proof.
  
\begin{remark}
Since we have $\Omega(N) \geq 2\omega(N)+6a-5$, now using the  completely additive property of $\Omega(N)$ we have $\Omega(2)+\Omega(5^{2a})+\Omega(m^2)=1+2a+ 2 \Omega(m)\geq 2\omega(N)+6a-5$ i.e; 
\begin{align}\label{ap}
\Omega(m) \geq \omega(N)+ 2a-3.
\end{align}
Finally, using additive property of $\omega(N)$and from (\ref{ap}) we get,

\begin{align}
\Omega(m) \geq \omega(m)+ 2a-1.
\end{align}
\end{remark}
\subsection{Proof of corollary \ref{coro 1.3}.}
Since $N=2\cdot 5^{2a}m^2$ is a friend of $20$, $N/2=5^{2a}m^2$ is an odd integer with abundancy index $7/5$, so, it is an odd $7/5$-perfect number and since $\Omega(m) \leq K$, from (\ref{ap}) we get, $K-2a+3 \geq \omega(N)$. Now using Lemma \ref{pn}, we have 
$N/2 < 5\cdot 6^{{(2^{{\omega(N)}}-1})^2}< 5\cdot 6^{(2^{K-2a+3}-1)^2}$, implies $N < 10\cdot 6^{(2^{K-2a+3}-1)^2}$. This completes the proof.

\subsection{Proof of Theorem \ref{thm 1.4}.}
Let $N$ be a friend of $20$ and also, let that exponent of $5$ in the prime factorization of friend $N$ of $20$ is congruent to $-1$ modulo $f$. Therefore, if $v_5(N)=2a$, then $2a\equiv -1\pmod f$ that is, $2a+1\equiv 0 \pmod f$ where $
 5^{f}\equiv 1\pmod p.$
Thus $p\mid \sigma(5^{2a})$ follows from Lemma \ref{ss2024thm1.1}. Therefore, assume that $v_p(N)=2a_p$. If  $2a_{p}\not \equiv -1 \pmod{f}$, then the theorem holds. Suppose that  $2a_{p} \equiv -1 \pmod{f}$ that is,  $2a_{p}+1 \equiv 0 \pmod{f}$. We are given that $p\equiv1\pmod 6$ therefore
$$\sigma(p^{2a_p}) \equiv 1+1+\cdots +1=2a_{p}+1 \pmod{3}.$$
Since $3\mid f$ we have that $2a_{p}+1\equiv 0 \pmod{3}$ this implies that 
$\sigma(p^{2a_p})\equiv 0\pmod 3.$
It follows that $3$ is a divisor of $N$ but this is impossible due to Lemma \ref{lemma19}. Therefore, we must have $2a_p\not\equiv -1\pmod{f}$. This proves the theorem.
\subsection{Proof of Corollary \ref{coro 1.6}.}
Let $N=2F^2$ be a friend of $20$. If possible, assume that $F$ is square-free, then since the order of $5$ in $(\mathbb{Z}/31\mathbb{Z})^\times$ is $3$ and $v_5(N)=2\equiv -1 \pmod{3}$ by Theorem \ref{thm 1.4} there exists an odd prime divisor of $N$ (say q) such that $v_{q}(N)\not\equiv 2 \pmod 3$ but $v_{q}(N)=2$ which is a contradiction. Therefore, our assumption that $F$ is square-free is wrong and thus $F$ must be a non square-free positive integer. This completes the proof.

\subsection{Proof of Theorem \ref{thm 1.8}}
Let 
$$N=2\cdot 5^{2a}\cdot \prod_{3\leq i \leq \omega(N)}q^{2a_i}_i$$ 
be a friend of 20, such that no $q_i=3,7$. To prove this theorem, it suffices to show that $q_r$ must be strictly less than $p_{L}$ by Lemma \ref{lemma13}. Suppose that $q_r\geq p_{L}$ where
$$L=\lceil\frac{\mathcal{U}\omega(N)}{\mathcal{V}}\rceil,~~~\frac{\mathcal{U}}{\mathcal{V}} > \frac{1}{\frac{28}{25}\cdot \displaystyle{\prod_{5\leq i\leq r+1} (1-\frac{1}{p_i})-1}}~~~(\text{where}~p_i~\text{is the $i$-th prime number}),$$
$\frac{\mathcal{U}}{\mathcal{V}}\in \mathbb{Q}^{+}\setminus \mathbb{Z}^{+}$, and $(\mathcal{U},\mathcal{V})=1$  such that $\mathcal{UV}(r-2)+2\mathcal{U}+\mathcal{V}>\mathcal{V}^2$. Note that $\mathcal{U}>\mathcal{V}>1$. Using Property (\ref{p4}) and Property (\ref{p5}) we get,
\begin{align*}
    I(N) & \leq I\left(2\cdot 5^{2a} \cdot p_5^{2a_3} \cdot p_6^{2a_4} \dots \cdot p_{r+1}^{2a_{r-1}} \cdot \prod_{r\leq i\leq \omega(N)} p^{2a_i}_{ L+i-r}\right)\\
    &<\frac{3}{2}\cdot \frac{5}{4}\cdot \prod_{5 \leq j\leq (r+1)} \frac{p_j}{p_j-1} \cdot \prod_{r \leq i\leq \omega(N)}\frac{p_{L+i-r}}{p_{L+i-r}-1}.
\end{align*}
This implies that, 
\begin{align*}
    I(N)&<\frac{3}{2}\cdot \frac{5}{4}\cdot \prod_{5 \leq j\leq (r+1)} \frac{p_j}{p_j-1} \cdot \prod_{r \leq i\leq \omega(N)}\frac{L+i-r}{L+i-r-1}\\
    &=\frac{3}{2}\cdot \frac{5}{4}\cdot \prod_{5 \leq j\leq (r+1)} \frac{p_j}{p_j-1} \cdot \frac{L+\omega(N)-r}{L-1}.
\end{align*}

Now we shall show that for any $\omega(N)\in\mathbb{Z^+}$, the following holds,
\begin{align*}
   \frac{L+\omega(N)-r}{L-1}< \frac{\mathcal{U+V}}{\mathcal{U}}.
\end{align*}
Since $\frac{\mathcal{U}}{\mathcal{V}}\in \mathbb{Q}^{+}\setminus \mathbb{Z}^{+}$, we can write $\mathcal{U}=\mathcal{V}k+\delta$, where $k,\delta\in \mathbb{Z^+}$ and $\delta\in \{1,\dots, \mathcal{V}-1\}$. Note that, $(\mathcal{V},\delta)=1$ as $(\mathcal{U,V})=1$. Then we get,
\begin{align*}
\frac{L+\omega(N)-r}{L-1}=\frac{(k+1)\omega(N)-r+\lceil\frac{\delta\omega(N)}{\mathcal{V}}\rceil}{k\omega(N)-1+\lceil\frac{\delta \omega(N)}{\mathcal{V}}\rceil}.    
\end{align*}
We now consider the following cases, where we essentially observe the behavior of $$\frac{(k+1)\omega(N)-r+\lceil\frac{\delta\omega(N)}{\mathcal{V}}\rceil}{k\omega(N)-1+\lceil\frac{\delta \omega(N)}{\mathcal{V}}\rceil},$$ based on the divisibility of $\omega(N)$ by $\mathcal{V}$.\\
\begin{center}
  \textbf{\textcolor{RedViolet}{
  Case 1.}}   
\end{center}
If $\mathcal{V} \nmid \omega(N)$ then $\frac{\delta \omega(N)}{\mathcal{V}}\not\in \mathbb{Z^+}$, therefore we have,
\begin{align*}
  \frac{(k+1)\omega(N)-r+\lceil\frac{\delta\omega(N)}{\mathcal{V}}\rceil}{k\omega(N)-1+\lceil\frac{\delta \omega(N)}{\mathcal{V}}\rceil} &=\frac{(k+1)\omega(N)-r+1+\frac{\delta\omega(N)}{\mathcal{V}}-\{\frac{\delta\omega(N)}{\mathcal{V}}\}}{k\omega(N)+\frac{\delta \omega(N)}{\mathcal{V}}-\{\frac{\delta\omega(N)}{\mathcal{V}}\}}\\
  &=\frac{(\mathcal{V}k+\mathcal{V}+\delta)\omega(N)+(1-r)\mathcal{V}-\mathcal{V}\{\frac{\delta\omega(N)}{\mathcal{V}}\}}{(\mathcal{V}k+\delta)\omega(N)-\mathcal{V}\{\frac{\delta\omega(N)}{\mathcal{V}}\}}\\
  &=\frac{(\mathcal{U}+\mathcal{V})\omega(N)+(1-r)\mathcal{V}-\mathcal{V}\{\frac{\delta\omega(N)}{\mathcal{V}}\}}{\mathcal{U}\omega(N)-\mathcal{V}\{\frac{\delta\omega(N)}{\mathcal{V}}\}}.
    \end{align*}
Note that for any positive integer  $Q$ which is not divisible by $\mathcal{V}$ can be written in the form $Q=\mathcal{V}q+v$, where $q\in \mathbb{Z}_{\geq 0}$ and $v\in \{1,2,\dots,\mathcal{V}-1\}$. Therefore in particular for $Q=\delta\omega(N)$, we have $\{\frac{\delta\omega(N)} {\mathcal{V}}\}\in \{\frac{1}{\mathcal{V}},\frac{2}{\mathcal{V}},\dots, \frac{\mathcal{V}-1}{\mathcal{V}}\}$ and thus,

\begin{align*}
    1\leq \mathcal{V}\{\frac{\delta\omega(N)}{\mathcal{V}}\}\leq \mathcal{V}-1
\end{align*}
that is,
\begin{align}\label{3a}
    (\mathcal{U}+\mathcal{V})\omega(N)+(1-r)\mathcal{V}-\mathcal{V}\{\frac{\delta\omega(N)}{\mathcal{V}}\}\leq (\mathcal{U}+\mathcal{V})\omega(N)+(1-r)\mathcal{V}-1
\end{align}
    and
    \begin{align}\label{3b}
        \mathcal{U}\omega(N)-\mathcal{V}+1\leq \mathcal{U}\omega(N)-\mathcal{V}\{\frac{\delta\omega(N)}{\mathcal{V}}\}.
    \end{align}

Using (\ref{3a}) and (\ref{3b}) we finally get
\begin{align*}
   \frac{(\mathcal{U}+\mathcal{V})\omega(N)+(1-r)\mathcal{V}-\mathcal{V}\{\frac{\delta\omega(N)}{\mathcal{V}}\}}{\mathcal{U}\omega(N)-\mathcal{V}\{\frac{\delta\omega(N)}{\mathcal{V}}\}}\leq \frac{(\mathcal{U}+\mathcal{V})\omega(N)+(1-r)\mathcal{V}-1}{\mathcal{U}\omega(N)-\mathcal{V}+1}.
\end{align*}
Define $\phi: [1,\infty)	\rightarrow\mathbb{R}$ by $\phi(t)=\frac{(\mathcal{U}+\mathcal{V})t+(1-r)\mathcal{V}-1}{\mathcal{U}t-\mathcal{V}+1}$. Note that
$\phi'(t)=\frac{\mathcal{UV}(r-2)+2\mathcal{U}+\mathcal{V}-\mathcal{V}^2}{(\mathcal{U}t-\mathcal{V}+1)^2}$ as $\mathcal{UV}(r-2)+2\mathcal{U}+\mathcal{V}>\mathcal{V}^2$, we have $\phi'(t)>0$ and thus $\phi$ is a strictly increasing function of t in $[1,\infty)$. Since $\lim_{t\rightarrow \infty} \phi(t)=\frac{\mathcal{U+V}}{\mathcal{U}}$ we have $\phi(t)<\frac{\mathcal{U+V}}{\mathcal{U}}$ for all $t\in [1,\infty)$. In particular for $t=\omega(N)$ we have
\begin{align*}
\frac{(\mathcal{U}+\mathcal{V})\omega(N)+(1-r)\mathcal{V}-1}{\mathcal{U}\omega(N)-\mathcal{V}+1}<\frac{\mathcal{U+V}}{\mathcal{U}},
\end{align*}
which immediately implies that 
$$\frac{L+\omega(N)-r}{L-1}< \frac{\mathcal{U+V}}{\mathcal{U}}.$$
\begin{center}
  \textbf{\textcolor{RedViolet}{
  Case 2.}}   
\end{center}
Now if $\mathcal{V}\mid \omega(N)$ then $\frac{\delta \omega(N)}{\mathcal{V}}\in \mathbb{Z^+}$. Therefore,
\begin{align*}
 \frac{(k+1)\omega(N)-r+\lceil\frac{\delta\omega(N)}{\mathcal{V}}\rceil}{k\omega(N)-1+\lceil\frac{\delta \omega(N)}{\mathcal{V}}\rceil}=\frac{(k+1)\omega(N)-r+\frac{\delta\omega(N)}{\mathcal{V}}}{k\omega(N)-1+\frac{\delta \omega(N)}{\mathcal{V}}}&=\frac{(\mathcal{V}k+\delta+\mathcal{V})\omega(N)-\mathcal{V}r}{(\mathcal{V}k+\delta)\omega(N)-\mathcal{V}}\\
 &=\frac{(\mathcal{U}+\mathcal{V})\omega(N)-\mathcal{V}r}{\mathcal{U}\omega(N)-\mathcal{V}}.
 \end{align*}
Define $\tau: [1,\infty)	\rightarrow\mathbb{R}$ by $\tau(t)=\frac{(\mathcal{U}+\mathcal{V})t-\mathcal{V}r}{\mathcal{U}t-\mathcal{V}}$. Then 
$\tau'(t)=\frac{\mathcal{UV}r-\mathcal{UV}-\mathcal{V}^2}{(\mathcal{U}t-\mathcal{V})^2}$ as $r\geq 3$ and $\mathcal{U}>\mathcal{V}$, $\mathcal{UV}(r-1)-\mathcal{V}^2> \mathcal{UV}(r-1)-\mathcal{UV}=\mathcal{UV}(r-2)> 0$ it follows that $\tau'(t)>0$ and thus $\tau$ is a strictly increasing function of t in $[1,\infty)$. Since $\lim_{t\rightarrow \infty} \tau(t)=\frac{\mathcal{U}+\mathcal{V}}{\mathcal{U}}$ we have $\tau(t)<\frac{\mathcal{U+V}}{\mathcal{U}}$ for all $t\in [1,\infty)$. In particular for $t=\omega(N)$ we have
\begin{align*}
\frac{(\mathcal{U}+\mathcal{V})\omega(N)-\mathcal{V}r}{\mathcal{U}\omega(N)-\mathcal{V}}<\frac{\mathcal{U+V}}{\mathcal{U}} 
\end{align*}
which immediately implies that 
\begin{align*}
   \frac{L+\omega(N)-r}{L-1}< \frac{\mathcal{U+V}}{\mathcal{U}}.
\end{align*}
Therefore for any $\omega(N)\in \mathbb{Z^+}$ we have
\begin{align*}
  \frac{L+\omega(N)-r}{L-1}< \frac{\mathcal{U+V}}{\mathcal{U}}.
\end{align*}
which shows that
\begin{align*}
    I(N)&<\frac{3}{2}\cdot \frac{5}{4}\cdot \prod_{5 \leq j\leq (r+1)} \frac{p_j}{p_j-1} \cdot \frac{\mathcal{U+V}}{\mathcal{U}}=\frac{3}{2}\cdot \frac{5}{4}\cdot \prod_{5 \leq j\leq (r+1)} \frac{p_j}{p_j-1}\cdot(1+ \frac{\mathcal{V}}{\mathcal{U}}),
\end{align*}
since $$\frac{\mathcal{U}}{\mathcal{V}} > \frac{1}{\frac{28}{25}\cdot \prod_{5\leq i\leq r+1} (1-\frac{1}{p_i})-1},$$
we get
\begin{align*}
    I(N)&< \frac{3}{2}\cdot \frac{5}{4}\cdot \prod_{5 \leq j\leq (r+1)} \frac{p_j}{p_j-1}\cdot(1+ \frac{\mathcal{V}}{\mathcal{U}})<\frac{3}{2}\cdot \frac{5}{4}\cdot \prod_{5 \leq j\leq (r+1)} \frac{p_j}{p_j-1}\cdot \frac{28}{25}\cdot \prod_{5\leq i\leq r+1} (1-\frac{1}{p_i})=\frac{21}{10}.
\end{align*}
Therefore, for $q_r\geq p_{L}$, $N$ can not be a friend of $20$. Hence, necessarily $q_r<p_{L}$. This completes the proof.

\subsection{Proof of Theorem \ref{newthm}}
Let $P$ be the largest prime divisor of $N$. Since $v_P(N)$ is even, if $v_P(N)\geq 4$ then we are done, therefore, we may assume $v_P(N)=2$. Since $(P^2,\sigma(P^2))=1$, $5\nmid P$ and $\sigma(N)=\frac{21}{10}N$ we have that $5P^2\sigma(P^2)\mid \sigma(N)$. Then
$$5P^2\sigma(P^2)<\sigma(N)=\frac{21}{10}N$$
since $\sigma(P^2)>P^2$ we obtain
$$5P^4<\frac{21}{10}N$$
that is
$$P<(\frac{21}{50})^{\frac{1}{4}}N^{\frac{1}{4}}<N^{\frac{1}{4}}.$$
This completes the proof.

\section{Appendix}
To deduce possible prime divisors of friend of $20$, at first we need to discard the prime numbers for which 
$I(N)<\frac{21}{10}.$
Since the friend $N$ of $20$ is of the form $$N=2\cdot5^{2a}\cdot\prod_{i=1}^{s-2}p_i^{2a_i}$$ 
at first we have to deduce necessary upper bound for $p_1$. The simple algorithm to do so is that, find the least $q_k=p_1$, where $q_k$ is the $k^{th}$ prime, so that
$$I(N)\leq I(2\cdot5^{2a}\cdot\prod_{i=k}^{s-2}q_i^{2a_i})<\frac{3\cdot 5}{2\cdot 4}\cdot \prod_{i=k}^{s-2}\frac{q
_i}{q_i-1}<\frac{21}{10}.$$
For example, suppose that $\omega(N)=4$. Note that if $p_1=19$ then
$$I(2\cdot5^{2a}\cdot\prod_{i=1}^{2}p_i^{2a_i})=I(2\cdot5^{2a}\cdot19^{a_1}\cdot p_2^{2a_2})\leq I(2\cdot5^{2a}\cdot19^{a_1}\cdot 23^{2a_2})<\frac{3\cdot 5\cdot 19\cdot 23}{2\cdot 4\cdot 18\cdot 22}<\frac{21}{10}.$$
Since $19$ is the smallest prime for which $I(N)<\frac{21}{10}$, we take $19$ to be an upper bound for $p_1$. To bound $p_2$, observe that it depends on $p_1$, in fact we have three possible choices for $p_1$. The algorithm to obtain an upper bound for $p_2$ is, fix $p_1$ and choose the least $q_r=p_2$, where $q_r$ is the $r^{th}$ prime, so that 
$$I(N)\leq I(2\cdot5^{2a}\cdot p_1^{2a_1}\cdot \prod_{i=r}^{s-3}q_i^{2a_i})<\frac{3\cdot 5\cdot p_1}{2\cdot 4\cdot (p_1-1)}\cdot \prod_{i=r}^{s-3}\frac{q
_i}{q_i-1}<\frac{21}{10}.$$
For example, suppose that $\omega(N)=4$. Since $p_1\leq 17$, we fix $p_1=17$. Then we choose $p_2=q_r$($r^{th}$ prime) so that
$$I(2\cdot5^{2a}\cdot\prod_{i=1}^{2}p_i^{2a_i})=I(2\cdot5^{2a}\cdot17^{2a_1}\cdot q_{r}^{2a_2})<\frac{3\cdot 5\cdot 17\cdot q_r}{2\cdot 4\cdot 16\cdot (q_{r}-1)}<\frac{21}{10}.$$
Calculating the inequality, we get that $19<q_4$. It follows that we can't choose $p_2\geq 23$, whenever we choose $p_1=17$. Therefore for $p_1=17$ we have $p_2\leq 19$.\\
To obtain an upper bound for $p_3$, we fix $p_1,p_2$ and choose smallest $l^{th}$ prime $q_l=p_3$ so that 
$$I(N)\leq I(2\cdot5^{2a}\cdot p_1^{2a_1}\cdot p_2^{2a_2} \prod_{i=l}^{s-4}q_i^{2a_i})<\frac{3\cdot 5\cdot p_1\cdot p_2}{2\cdot 4\cdot (p_1-1)\cdot (p_2-1)}\cdot \prod_{i=l}^{s-4}\frac{q
_i}{q_i-1}<\frac{21}{10}.$$
Note that, the upper bound for $p_3$ also depends on the choices of $p_1$ and $p_2$. Repeating this method we eventually get an upper bound for every $p_i$.\\\\

The following table contains all necessary $f_{p}^{q}$, for primes $p$ and $q$ that are used in the proof of Theorem \ref{thm 1.1}.
\begin{table}[h]
\begin{center}
\begin{tabular}{|*{5}{c|}}
\hline
$f_{11}^{5}=5$ & $f_{31}^{5}=3$ & $f_{71}^{5}=5$ & $f_{59}^{5}=29$ & $f_{35671}^{5}=29$\\  
\hline
$f_{5}^{11}=5$ & $f_{3221}^{11}=5$ & $f_{7}^{11}=3$ & $f_{19}^{11}=3$ & $f_{8971}^{5}=23$\\
\hline
$f_{7}^{71}=7$ & $f_{883}^{71}=7$ & $f_{7}^{151}=3$ & $f_{3}^{151}=3$ & $f_{7}^{191}=3$\\
\hline
$f_{31}^{191}=3$ & $f_{7}^{211}=7$ & $f_{307189}^{211}=7$ & $f_{7}^{281}=7$ & $f_{29}^{281}=7$\\
\hline
$f_{7}^{331}=3$ & $f_{3}^{331}=3$ & $f_{7}^{401}=3$ & $f_{23029}^{401}=3$ & $f_{7}^{421}=7$\\
\hline
$f_{797310237403261}^{421}=7$ & $f_{7}^{491}=7$ & $f_{617}^{491}=3$ & $f_{7}^{541}=3$ & $f_{3}^{541}=3$\\
\hline
$f_{7}^{571}=3$ & $f_{3}^{571}=3$ & $f_{7}^{631}=7$ & $f_{6032531}^{631}=7$ & $f_{7}^{641}=3$\\
\hline
$f_{58789}^{641}=3$ & $f_{7}^{701}=7$ & $f_{16975792017452101}^{701}=7$ & $f_{7}^{751}=3$ & $f_{3}^{751}=3$\\
\hline
$f_{7}^{911}=7$ & $f_{81750272028928231}^{911}=7$ & $f_{7}^{991}=3$ & $f_{3}^{991}=3$ & $f_{7}^{1031}=3$\\
\hline
$f_{97}^{1031}=3$ & $f_{7}^{1051}=7$ & $f_{29}^{1051}=7$ & $f_{7}^{1061}=3$ & $f_{160969}^{1061}=3$\\
\hline
$f_{5}^{61}=5$ & $f_{7}^{67}=3$ & $f_{3}^{67}=3$ & $f_{5}^{71}=5$ & $f_{2221}^{71}=5$\\
\hline
$f_{7}^{79}=3$ & $f_{3}^{79}=3$ & $f_{5}^{101}=5$ & $f_{491}^{101}=5$ & $f_{7}^{107}=3$\\
\hline
$f_{13}^{107}=3$ & $f_{5}^{31}=5$ & $f_{17351}^{31}=5$ & $f_{911}^{31}=65$ & $f_{11}^{31}=5$\\
\hline
$f_{991}^{31}=99$ & $f_{3}^{31}=3$ & $f_{2011}^{31}=335$ & $f_{11}^{31}=5$ & $f_{79}^{5}=39$\\
\hline
$f_{31}^{5}=3$ & $f_{109}^{5}=27$ & $f_{19}^{5}=9$ & $f_{101}^{5}=25$ & $f_{71}^{5}=5$\\
\hline
$f_{7}^{23}=3$ & $f_{79}^{23}=3$ & $f_{7}^{29}=7$ & $f_{88009573}^{29}=7$ & $f_{7}^{37}=3$\\
\hline
$f_{67}^{37}=3$ & --- & --- & --- & --- \\
\hline
\end{tabular}
\caption{\textcolor{Mycolor1}{List of $f_{p}^q$}}
\end{center}
\end{table}
\section{Concluding Remarks}

 In conclusion, we have established that if $N$ is a friend of $20$, it must have at least six distinct prime factors. This condition significantly narrows the search space for potential friends of $20$, while also highlighting the complexity of identifying such a number. Moreover, we have provided necessary upper bounds for all prime divisors of friends of 20 which can be used to determine which prime numbers can occur in the prime factorization of friend of 20 for some choices of first few smallest prime divisors of friend of 20.

The methods used to prove $\omega(N)\geq 6$ can be further applied to eliminate cases such as $\omega(N)=6,7$ and so on, although this approach may be lengthy but it is expected to be effective.

\section{Data Availablity} 	
The authors confirm that their manuscript has no associated data.

\section{Competing Interests}
The authors confirm that they have no competing interest. 
\section*{Acknowledgement}
The authors are grateful to the anonymous referee for the careful reading of the manuscript and for the valuable suggestions that greatly improved the clarity and presentation of the paper. The authors also thank the referee for Theorem \ref{newthm}. This work was initiated during the visit of the third author to the first author as part of a summer research internship (Ref. No. IITRPR/CEOA/SI/2024/68), held from May 28, 2024, to July 15, 2024, at IIT Ropar. The second author contributed to the work remotely during this period.


\begin{thebibliography}{99}
\bibitem{pb}
J. Barkley Rosser and Lowell Schoenfeld, \textit{Approximate formulas for some functions of prime numbers}, Illinois J. Math. \textbf{6} (1962), 64--94.
\bibitem{11}
T. Chatterjee, S. Mandal, and S. Mandal, \textit{A note on necessary conditions for a friend of 10}, \href{https://doi.org/10.48550/arXiv.2404.00624}{arXiv:2404.00624} (submitted).


\bibitem{1}
P. Erd\H{o}s, \textit{On the distribution of numbers of the form $\frac{\sigma(n)}{n}$ and on some related questions}, Pacific J. Math. \textbf{52} (1974), 59--65.

\bibitem{2}
R. Laatsch, \textit{Measuring the Abundancy of Integers}, Math. Mag. \textbf{59} (1986), 84--92.


\bibitem{8}
S. Mandal, and S. Mandal, \textit{Upper Bounds for the Prime Divisors of Friends of 10}, Resonance \textbf{30}, no. 2 (2025), 263--275.

\bibitem{9}
S. Mandal, \textit{Exploring the Relationships Between the Divisors of Friends of 10}, News Bull. Calcutta Math. Soc. \textbf{48}, no. 1--3 (2025), 21--32.

\bibitem{10}
S. Mandal, \textit{Prime Divisors of 10's Friends: A Generalization of Prior Bounds}, \href{https://doi.org/10.48550/arXiv.2412.02701}{arXiv:2412.02701} (submitted).




\bibitem{5}
P. Nielsen, \textit{Odd perfect numbers, Diophantine equations, and upper bounds}, Math. Comp. \textbf{84}, no. 295 (2015), 2549--2567.

\bibitem{6}
H. R. Thackeray, \textit{Each friend of 10 has at least 10 nonidentical prime factors}, Indag. Math. \textbf{35}, no. 3 (2024), 595--607.
\bibitem{ward}
J. Ward, \textit{Does Ten Have a Friend?}, Int. J. Math. Comput. Sci. \textbf{3} (2008), 153--158.

\bibitem{7}
P. A. Weiner, \textit{The Abundancy Ratio, a Measure of Perfection}, Math. Mag. \textbf{73} (2000), 307--310.



\bibitem{OEIS}
OEIS Foundation Inc., \textit{The Online Encyclopedia of Integer Sequences, Sequence A074902}, accessed October 2024, \url{https://oeis.org/A074902}.

\end{thebibliography}
 \end{document}